\documentclass[11pt,reqno]{amsart}
\usepackage{setspace}
\usepackage{amsmath, amssymb, amscd, amsthm, amsfonts}
\usepackage{graphicx}

\usepackage{mathpazo}

\oddsidemargin 0pt
\evensidemargin 0pt
\marginparwidth 40pt
\marginparsep 10pt
\topmargin -20pt
\headsep 10pt
\textheight 8.7in
\textwidth 6.65in
\linespread{1.2}

\usepackage{indentfirst}
\usepackage{amssymb,amsmath,amsfonts,amsthm}
\usepackage[all]{xy}
\usepackage{enumerate}
\usepackage{mathrsfs}
\usepackage{graphicx}
 
\numberwithin{equation}{section}
\usepackage[all]{xy}
\usepackage{amssymb,amscd,amsthm,amsmath,graphicx,color}
\usepackage{mathrsfs}
\usepackage[english]{babel}
\usepackage[utf8x]{inputenc}
\usepackage{hyperref}

\newtheorem{theorem}{Theorem}[section]
\newtheorem*{proposition*}{Proposition}
\newtheorem{proposition}[theorem]{Proposition}
\newtheorem{corollary}[theorem]{Corollary}
\newtheorem{lemma}[theorem]{Lemma}

\newtheorem{remark}[theorem]{Remark}
\newtheorem{definition}[theorem]{Definition}
\newtheorem{question}[theorem]{Question}

\DeclareMathOperator{\cidim}{CI-dim}

\DeclareMathOperator{\cd}{cd}
\DeclareMathOperator{\gdim}{G-dim}

\begin{document} 

\title[Ext-Tor duality, cohomological dimension, and applications]{An Ext-Tor duality theorem, cohomological dimension, and applications}

\author{Rafael Holanda}
\address{Departamento de Matemática, Universidade Federal da Paraíba - 58051-900, João Pessoa, PB, Brazil} \email{rf.holanda@gmail.com}

\author{Cleto B. Miranda-Neto}
\address{Departamento de Matemática, Universidade Federal da Paraíba - 58051-900, João Pessoa, PB, Brazil} \email{cleto@mat.ufpb.br}

\date{\today}

\keywords{Ext-Tor duality, free module, cohomological dimension, Huneke-Wiegand conjecture}
\subjclass[2020]{Primary 13D07, 13D05, 13C10, 13D45, 13H10; Secondary 13D02, 13C14, 13C15.}

\maketitle

\begin{abstract} We provide a duality theorem between Ext and Tor modules over a Cohen-Macaulay local ring possessing a canonical module, and use it to prove some freeness criteria for finite modules. The applications include a characterization of codimension three complete intersection ideals and progress on a long-held multi-conjecture of Vasconcelos. By a similar technique, we furnish another theorem which in addition makes use of the notion of cohomological dimension and is mainly of interest in dimension one; as an application, we show that the celebrated Huneke-Wiegand conjecture in the case of complete intersections holds true provided that a single additional condition is satisfied.

\end{abstract}

\section{Introduction}\label{mcmsection}

As is well-known, duality results are desirable in mathematics, particularly in homological commutative algebra. Our main purpose in this note is to obtain, by means of spectral sequence methods, a new duality theorem involving Ext and Tor modules over a Cohen-Macaulay local ring which admits a canonical module; see Theorem \ref{exttor} (also Remark \ref{differ} for an alternative proof). By means of a similar technique, we also provide Theorem \ref{freeness}.

Then we derive  applications essentially related to the problem of deciding freeness of a given finite module over such a ring, which include a new characterization of ideals that can be generated by a regular sequence of length $3$ (Corollary \ref{appl1}), and progress toward the 1985 multi-conjecture of Vasconcelos \cite{V} concerning fundamental modules such as the normal module, the twisted conormal module and the module of K\"ahler differentials of an algebra which is now assumed to be essentially of finite type over a field containing the rationals (Corollary \ref{appl4}). 

Another main byproduct is concerned with the celebrated Huneke-Wiegand conjecture \cite{HW}, which remains open, for instance, in the case of complete intersection local rings (of codimension at least 2); over such rings, as a corollary of more general cases, we detect one single extra condition related to the concept of (generalized) cohomological dimension under which the conjecture is proven to be true; see Corollary \ref{H-W-version}.

We consider a few conventions that will be in force throughout the paper. By a \textit{ring} we mean a commutative Noetherian ring with multiplicative identity 1. Given a ring $R$, we say that an $R$-module $N$ is \textit{finite} if $N$ is finitely generated. For a local ring $R$ with canonical module $\omega_R$, we set $$N^{\vee}={\rm Hom}_R(N, \omega_R),$$ the canonical dual of $N$. As usual, if ${\bf m}$ is the maximal ideal of $R$, then by ${\rm depth}\,N$ we mean the ${\bf m}$-depth of the $R$-module $N$, and the finite $R$-module $N$ is maximal Cohen-Macaulay if ${\rm depth}\,N={\rm dim}\,R$. By a widely accepted convention, ${\rm depth}\,0=+\infty$, so the zero module is not maximal Cohen-Macaulay. When $R$ is Cohen-Macaulay, a fundamental basic property (which we shall freely use) is that if $N$ is maximal Cohen-Macaulay then so is $N^{\vee}$, and in addition $N\cong N^{\vee\vee}$ (see \cite[Proposition 3.3.10]{BH}). 

\medskip

\noindent{\bf Acknowledgements.} The first-named author was supported by the CNPq-Brazil grant 200863/2022-3. The second-named author was partially supported by the CNPq-Brazil grants 301029/2019-9 and 406377/2021-9.

\section{Main result and first corollaries}

\subsection{Main result}
We establish the following duality theorem between Ext and Tor modules.

\begin{theorem}\label{exttor}
Let $R$ be a Cohen-Macaulay local ring with canonical module $\omega_R$. Let $M$ and $N$ be finite $R$-modules, with $N$ maximal Cohen-Macaulay, and let $n\geq 1$ be an integer. If, for each $j=0, \ldots, n-1$, the $R$-module ${\rm Tor}_j^R(M,N^\vee)$ is either zero or maximal Cohen-Macaulay, then there exist isomorphisms $${\rm Ext}^j_R(M,N)\cong{\rm Tor}^R_j(M,N^\vee)^\vee\quad \mbox{for \,all} \quad j=0, \ldots, n.$$ In particular, ${\rm Ext}^j_R(M,N)^\vee\cong{\rm Tor}^R_j(M,N^\vee)$ for all $j=0, \ldots, n-1$. There is also an exact sequence 
$$\xymatrix@=1em{0\ar[r] & {\rm Ext}^1_R({\rm Tor}_n^R(M,N^\vee),\omega_R)\ar[r] & {\rm Ext}_R^{n+1}(M,N)\ar[r] & {\rm Tor}_{n+1}^R(M,N^\vee)^\vee\ar[dll]\\  & {\rm Ext}^2_R({\rm Tor}_n^R(M,N^\vee),\omega_R)\ar[r] & {\rm Ext}^{n+2}_R(M,N).}$$
\end{theorem}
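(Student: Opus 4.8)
The plan is to realize both sides of the comparison as the two abutments of a single double complex. Since $N$ is maximal Cohen-Macaulay, the Bruns-Herzog property quoted in the introduction gives $N\cong N^{\vee\vee}={\rm Hom}_R(N^\vee,\omega_R)$ and also that $N^\vee$ is maximal Cohen-Macaulay, whence ${\rm Ext}^i_R(N^\vee,\omega_R)=0$ for all $i>0$ (higher Ext of a maximal Cohen-Macaulay module into the canonical module vanishes, as ${\rm Ext}^i_R(T,\omega_R)=0$ for $i>\dim R-{\rm depth}\,T$). I would fix a free resolution $P_\bullet\to M$ and set $C_\bullet=P_\bullet\otimes_R N^\vee$, a chain complex with $H_q(C_\bullet)={\rm Tor}^R_q(M,N^\vee)$, together with an injective resolution $\omega_R\to I^\bullet$. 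The object of study is then the first-quadrant double complex $K^{p,q}={\rm Hom}_R(C_q,I^p)$ and its total complex.

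Next I would compare the two spectral sequences of $K^{p,q}$. Taking cohomology first in the injective ($p$) direction, each row reads ${\rm Ext}^p_R(C_q,\omega_R)$; since $C_q\cong(N^\vee)^{\oplus r_q}$ is maximal Cohen-Macaulay, this vanishes for $p>0$ and equals ${\rm Hom}_R(C_q,\omega_R)\cong{\rm Hom}_R(P_q,N^{\vee\vee})\cong{\rm Hom}_R(P_q,N)$ for $p=0$, by tensor-Hom adjunction together with $N\cong N^{\vee\vee}$. Hence this spectral sequence collapses onto the bottom row ${\rm Hom}_R(P_\bullet,N)$, identifying the abutment $H^\bullet({\rm Tot}\,K)$ with ${\rm Ext}^\bullet_R(M,N)$. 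Computing instead in the $C$-direction first (using exactness of ${\rm Hom}_R(-,I^p)$, so that $C$-cohomology is ${\rm Hom}_R({\rm Tor}^R_q(M,N^\vee),I^p)$) and then in the injective direction yields the comparison spectral sequence
\[
E_2^{p,q}={\rm Ext}^p_R({\rm Tor}^R_q(M,N^\vee),\omega_R)\ \Longrightarrow\ {\rm Ext}^{p+q}_R(M,N),
\]
with differentials $d_r\colon E_r^{p,q}\to E_r^{p+r,\,q-r+1}$.

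Now I would feed in the hypothesis: for $q=0,\dots,n-1$ the module ${\rm Tor}^R_q(M,N^\vee)$ is zero or maximal Cohen-Macaulay, so again $E_2^{p,q}=0$ whenever $p>0$ and $q\le n-1$; that is, the first $n$ columns are concentrated in the row $p=0$. For total degree $j\le n$, every contribution $E_\infty^{p,\,j-p}$ with $p\ge 1$ lies in such a killed spot, and the only differentials touching $E^{0,j}$ have source or target in a killed spot (or off the first quadrant); thus ${\rm Ext}^j_R(M,N)\cong E_\infty^{0,j}=E_2^{0,j}={\rm Tor}^R_j(M,N^\vee)^\vee$, which gives the displayed isomorphisms. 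Applying $(-)^\vee$ and invoking reflexivity ${\rm Tor}^R_j(M,N^\vee)^{\vee\vee}\cong{\rm Tor}^R_j(M,N^\vee)$ of the maximal Cohen-Macaulay modules (for $j\le n-1$) yields the ``in particular'' statement.

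Finally, for the exact sequence I would inspect total degrees $n+1$ and $n+2$. In degree $n+1$ only $E_2^{0,n+1}={\rm Tor}^R_{n+1}(M,N^\vee)^\vee$ and $E_2^{1,n}={\rm Ext}^1_R({\rm Tor}^R_n(M,N^\vee),\omega_R)$ survive on page two, and the sole possibly nonzero differential among the relevant spots is $d_2\colon E_2^{0,n+1}\to E_2^{2,n}={\rm Ext}^2_R({\rm Tor}^R_n(M,N^\vee),\omega_R)$: the spot $E^{1,n}$ survives untouched, while $E_\infty^{0,n+1}=\ker d_2$ and $E_\infty^{2,n}=\operatorname{coker}d_2$. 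Reading off the two-step filtration of ${\rm Ext}^{n+1}_R(M,N)$ (with $E_\infty^{1,n}$ as submodule and $E_\infty^{0,n+1}$ as quotient) together with the bottom filtration piece $E_\infty^{2,n}\hookrightarrow{\rm Ext}^{n+2}_R(M,N)$ splices into precisely the asserted five-term sequence, the middle map ${\rm Tor}^R_{n+1}(M,N^\vee)^\vee\to{\rm Ext}^2_R({\rm Tor}^R_n(M,N^\vee),\omega_R)$ being $d_2$. I expect the main obstacle to be exactly this last bookkeeping: confirming that no further $E_2$-spots in total degrees $n+1$ and $n+2$ are nonzero and that all higher differentials in and out of the relevant positions vanish, so that the edge maps assemble into the stated sequence with nothing extraneous.
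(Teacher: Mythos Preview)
Your argument is correct and is essentially the paper's own proof: both use the first-quadrant spectral sequence $E_2^{p,q}={\rm Ext}^p_R({\rm Tor}^R_q(M,N^\vee),\omega_R)\Rightarrow{\rm Ext}^{p+q}_R(M,N)$, note that the hypothesis kills $E_2^{p,q}$ for $p\geq 1$ and $q\leq n-1$, and then read off the isomorphisms and the five-term sequence from the resulting shape. The only difference is cosmetic: you build the spectral sequence explicitly from the double complex ${\rm Hom}_R(P_\bullet\otimes_R N^\vee,I^\bullet)$ and check the edge bookkeeping in more detail, whereas the paper simply asserts the existence of the spectral sequence and draws the $E_2$-page; your worry about extra terms in total degree $n+2$ is unfounded, since the injection $E_\infty^{2,n}\hookrightarrow{\rm Ext}^{n+2}_R(M,N)$ only needs $E_\infty^{p,\,n+2-p}=0$ for $p\geq 3$, which holds.
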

\begin{proof} We can assume $N\neq 0$. Because $N^{\vee}$ is maximal Cohen-Macaulay and $N\cong N^{\vee\vee}$, there exists a first quadrant spectral sequence
$$E_2^{p,q}={\rm Ext}^p_R({\rm Tor}_q^R(M,N^\vee),\omega_R)\Rightarrow_q{\rm Ext}^{p+q}_R(M,N),$$ which must satisfy $$E_2^{p,q}=0 \quad \mbox{for\, all} \quad p\geq 1 \quad \mbox{and} \quad q=0, \ldots, n-1 .$$ This spectral sequence has therefore the following shape at its second page:
$$\xymatrix@=1em{
0 & 0 & \cdots & 0 & E_2^{n,2} & E_2^{n+1,2} & \cdots
\\
0 & 0 & \cdots & 0 & E_2^{n,1} & E_2^{n+1,1} & \cdots
\\
E_2^{0,0} & E_2^{1,0} & \cdots & E_2^{n-1,0} & E_2^{n,0} & E_2^{n+1,0}\ar[uul] & \cdots
}$$ From this shape and the convergence, we obtain the desired isomorphisms
$${\rm Tor}_j^R(M,N^\vee)^\vee=E_2^{0,j}\cong{\rm Ext}^j_R(M,N) \quad \mbox{for \,all} \quad j=0, \ldots, n$$ as well as the five-term-type exact sequence
$$\xymatrix{0\ar[r] & E_2^{n, 1}\ar[r] & {\rm Ext}_R^{n+1}(M,N)\ar[r] & E_2^{n+1, 0}\ar[r] & E_2^{n, 2}\ar[r] & {\rm Ext}^{n+2}_R(M,N).}$$ 
\end{proof}

\begin{remark}\label{differ}\rm ({\it A different proof.}) Our spectral-sequence-based proof above is short and self-contained. For those readers not familiar with spectral sequence methods, we are also able to provide an alternative proof which, on the other hand, makes use of a strong theorem from \cite{yo}; more precisely, from \cite[Theorem 2.3]{yo} with $X$ therein taken to be ${\bf F}\otimes_RN^{\vee}$ (where ${\bf F}=\{\xymatrix@=1em{\cdots \ar[r] & F_i\ar[r]^{d_i} & F_{i-1}\ar[r] & \cdots\}}$ is a minimal free resolution of our $M$), and $M$ therein taken as our $\omega_R$. Indeed, by the well-known depth lemma, the three short exact sequences \begin{equation}\label{seq1}\xymatrix{0\ar[r] & {\rm Tor}_i^R(M,N^\vee)\ar[r] & {\rm coker}(d_{i+1}\otimes_RN^{\vee})\ar[r] & {\rm im}(d_{i}\otimes_RN^{\vee})\ar[r] & 0}\end{equation} \begin{equation}\label{seq2}\xymatrix{0\ar[r] & {\rm im}(d_{i}\otimes_RN^{\vee})\ar[r] & {\rm ker}(d_{i-1}\otimes_RN^{\vee})\ar[r] & {\rm Tor}_{i-1}^R(M,N^\vee)\ar[r] & 0}\end{equation} \begin{equation}\label{seq3}\xymatrix{0\ar[r] & {\rm ker}(d_{i}\otimes_RN^{\vee})\ar[r] & F_i\otimes_RN^{\vee}\ar[r] & {\rm im}(d_{i}\otimes_RN^{\vee})\ar[r] & 0}\end{equation} enable us to derive that the $R$-modules $${\rm im}(d_{i}\otimes_RN^{\vee}),  \quad {\rm ker}(d_{i}\otimes_RN^{\vee}) \quad \mbox{and} \quad {\rm coker}(d_{i}\otimes_RN^{\vee})$$ are maximal Cohen-Macaulay for all $i=0, \ldots, n$. Moreover, by considering the induced long exact sequence in Ext coming from applying $(-)^{\vee}$ to (\ref{seq1}) with $i=n$, we obtain $${\rm Ext}^j_R({\rm Tor}_n^R(M,N^\vee),\omega_R)\cong {\rm Ext}^j_R({\rm coker}(d_{n+1}\otimes_RN^{\vee}),\omega_R) \quad \mbox{for\, all} \quad j\geq 1.$$ Now,  applying \cite[Theorem 2.3]{yo} the result follows, noting that while $X=\{X^j\}$ is assumed therein to be a complex of projective $R$-modules, this is not needed in the proof as it suffices that ${\rm Ext}_R^i(X^j, M)=0$ for all $i\geq 1$ and all $j$.

\end{remark}

\begin{question}\rm Does Theorem \ref{exttor} remain true if each non-zero  ${\rm Tor}_j^R(M,N^\vee)$, with $0\leq j \leq n-1$, is required to have a sufficiently high depth $($depending on $j$$)$ but possibly smaller than ${\rm dim}\,R$? 
    
\end{question}

\subsection{First corollaries}
The first byproduct is an immediate consequence of Theorem \ref{exttor}. Recall that two finite $R$-modules $M, N$ are said to be Tor-{\it independent} if $${\rm Tor}_j^R(M, N)=0 \quad \mbox{for\, all} \quad j\geq 1.$$

\begin{corollary}\label{lmweak0}
Let $R$ be a Cohen-Macaulay local ring possessing a canonical module, and $M, N$ be finite $R$-modules. Suppose $N$ and $M\otimes_RN^{\vee}$ are maximal Cohen-Macaulay. The following assertions hold:
\begin{itemize}
    \item [(i)] If $n\geq 1$ is an integer and ${\rm Tor}_j^R(M,N^\vee)=0$ for all $j=1, \ldots, n$, then ${\rm Ext}^j_R(M,N)=0$ for all $j=1, \ldots, n$;
\item [(ii)] If $M$ and $N^{\vee}$ are {\rm Tor}-independent, then ${\rm Ext}^j_R(M,N)=0$ for all $j\geq 1$.
\end{itemize}
\end{corollary}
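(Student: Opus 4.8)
The plan is to deduce both statements directly from Theorem \ref{exttor}, whose hypotheses turn out to be satisfied essentially for free in the present setting. First I would check the assumptions of Theorem \ref{exttor}. That theorem requires $N$ to be maximal Cohen-Macaulay, which is given, together with the condition that each ${\rm Tor}_j^R(M, N^\vee)$ for $0 \leq j \leq n-1$ be either zero or maximal Cohen-Macaulay. The key observation is that ${\rm Tor}_0^R(M, N^\vee) = M \otimes_R N^\vee$, which is maximal Cohen-Macaulay precisely by our standing hypothesis; this is exactly where that assumption enters. For the remaining indices $1 \leq j \leq n-1$ the modules ${\rm Tor}_j^R(M, N^\vee)$ vanish by the hypothesis of part (i), so they are trivially admissible.

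With the hypotheses verified, Theorem \ref{exttor} supplies isomorphisms ${\rm Ext}^j_R(M, N) \cong {\rm Tor}^R_j(M, N^\vee)^\vee$ for every $j = 0, \ldots, n$. For part (i) it then suffices to note that for $j = 1, \ldots, n$ we have ${\rm Tor}^R_j(M, N^\vee) = 0$, whence ${\rm Tor}^R_j(M, N^\vee)^\vee = {\rm Hom}_R(0, \omega_R) = 0$ and therefore ${\rm Ext}^j_R(M, N) = 0$. This settles (i).

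For part (ii), the Tor-independence of $M$ and $N^\vee$ says exactly that ${\rm Tor}_j^R(M, N^\vee) = 0$ for all $j \geq 1$. In particular the hypotheses of part (i) hold for every integer $n \geq 1$, so applying (i) with $n$ arbitrarily large immediately yields ${\rm Ext}^j_R(M, N) = 0$ for all $j \geq 1$.

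There is no serious obstacle here, since the corollary is an immediate specialization of the main theorem; the only point deserving minimal care is recognizing that the maximal Cohen-Macaulayness of $M \otimes_R N^\vee$ is precisely what certifies the $j = 0$ instance of the theorem's Tor-hypothesis, after which the vanishing of the higher Tor modules forces the vanishing of the corresponding Ext modules through the duality isomorphisms.
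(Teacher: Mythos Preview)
Your proof is correct and follows exactly the approach the paper intends: the corollary is stated there as an immediate consequence of Theorem \ref{exttor}, and you have simply spelled out why the hypotheses of that theorem are met (with the maximal Cohen-Macaulayness of $M\otimes_RN^{\vee}$ handling $j=0$ and the Tor-vanishing handling $1\leq j\leq n-1$) and then read off the Ext-vanishing from the duality isomorphisms.
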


\begin{remark} \rm  It is worth pointing out that, according to \cite[Lemma 3.4]{LM}, one situation where the $R$-module $M\otimes_RN^{\vee}$ is known to be maximal Cohen-Macaulay is when ${\rm Ext}^i_R(M,N)=0$ for all $i=1, \ldots, {\rm dim}\,R$.
\end{remark}

Now let us introduce a piece of standard notation. For a local ring $R$, a finite $R$-module $M$ and an integer $s\geq 1$, we denote by $\Omega_s^RM$ the
$s$-th syzygy module of $M$, namely, the image of the $s$-th differential map in a minimal free resolution
of $M$. We set $\Omega_0^RM=M$. Note that $\Omega_s^RM$ is, up to isomorphism,
uniquely determined since so is a minimal free resolution of $M$.

\begin{question} \rm Is Corollary \ref{lmweak0} still true if we replace the maximal Cohen-Macaulayness of $M\otimes_RN^{\vee}$ with the weaker assumption ${\rm depth}\, \Omega_1^RM\otimes_RN^{\vee}\neq {\rm depth}\,M\otimes_RN^{\vee}+1$? \end{question}

In \cite[Lemma 3.3]{LM} it was proved that, if $R$ is a Cohen-Macaulay local ring with canonical module $\omega_R$, and $M, N$ are finite $R$-modules such that $$N, \quad M\otimes_RN^\vee \quad \mbox{and} \quad \Omega_1^RM\otimes_RN^\vee$$ are maximal Cohen-Macaulay, then ${\rm Ext}^1_R(M,N)=0$ if and only if ${\rm Tor}_1^R(M,N^\vee)=0$. We are able to substantially improve  this result, as a particular case of the following corollary.

\begin{corollary}\label{lmweak}
Let $R$ be a Cohen-Macaulay local ring possessing a canonical module. Let $M$ and $N$ be finite $R$-modules with $N$ maximal Cohen-Macaulay, and $n\geq 1$ an integer. Assume that, for each $j=0, \ldots, n-1$, the $R$-module ${\rm Tor}_j^R(M,N^\vee)$ is either zero or maximal Cohen-Macaulay. The following assertions hold true:
\begin{itemize}
    \item [(i)] Suppose ${\rm Tor}_n^R(M,N^\vee)=0$. Then, ${\rm Ext}^n_R(M,N)=0$ and
$${\rm Ext}^{n+1}_R(M,N)\cong{\rm Tor}^R_{n+1}(M,N^\vee)^\vee.$$ 
    \item [(ii)] Suppose ${\rm Tor}_n^R(M,N^\vee)\neq0$. If both $\Omega_{n-1}^RM\otimes_RN^\vee$ and $\Omega_n^RM\otimes_RN^\vee$ are maximal Cohen-Macaulay, then ${\rm Tor}_n^R(M,N^\vee)$ must be maximal Cohen-Macaulay as well, and
$${\rm Ext}^j_R(M,N)\cong{\rm Tor}_j^R(M,N^\vee)^\vee \quad \mbox{for\, all} \quad j=0, \ldots, n+1.$$ In particular, ${\rm Ext}^n_R(M,N)\neq 0$.
\end{itemize}
\end{corollary}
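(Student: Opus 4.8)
The plan is to deduce Corollary \ref{lmweak} directly from Theorem \ref{exttor}, treating the two parts separately but drawing on the same machinery. The hypotheses of the corollary are exactly those of the theorem (for the indices $j=0,\ldots,n-1$), so the theorem's conclusions are available immediately: I get the isomorphisms ${\rm Ext}^j_R(M,N)\cong{\rm Tor}^R_j(M,N^\vee)^\vee$ for $j=0,\ldots,n$, together with the six-term exact sequence involving ${\rm Ext}^1_R({\rm Tor}_n^R(M,N^\vee),\omega_R)$ and ${\rm Ext}^2_R({\rm Tor}_n^R(M,N^\vee),\omega_R)$. The whole argument is then a matter of feeding the extra hypothesis of each part into these outputs.

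For part (i), I would assume ${\rm Tor}_n^R(M,N^\vee)=0$. First, the isomorphism ${\rm Ext}^n_R(M,N)\cong{\rm Tor}^R_n(M,N^\vee)^\vee=0^\vee=0$ gives the vanishing of ${\rm Ext}^n_R(M,N)$ at once. Next, since ${\rm Tor}_n^R(M,N^\vee)=0$, both Ext-into-$\omega_R$ terms in the theorem's exact sequence vanish (${\rm Ext}^i_R(0,\omega_R)=0$), so the sequence collapses to force ${\rm Ext}^{n+1}_R(M,N)\cong{\rm Tor}^R_{n+1}(M,N^\vee)^\vee$. This part is essentially automatic.

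For part (ii), I assume ${\rm Tor}_n^R(M,N^\vee)\neq 0$ and that $\Omega_{n-1}^RM\otimes_RN^\vee$ and $\Omega_n^RM\otimes_RN^\vee$ are maximal Cohen-Macaulay. The key step is to promote ${\rm Tor}_n^R(M,N^\vee)$ itself to maximal Cohen-Macaulay status. To do this I would exploit the short exact sequences coming from the minimal free resolution of $M$ tensored with $N^\vee$ — the analogues of \eqref{seq1}, \eqref{seq2}, \eqref{seq3} in Remark \ref{differ}. Concretely, ${\rm Tor}_n^R(M,N^\vee)$ sits as a subquotient built from ${\rm ker}$ and ${\rm im}$ of the maps $d_n\otimes_R N^\vee$ and $d_{n+1}\otimes_R N^\vee$; since $\Omega_{n-1}^RM$ and $\Omega_n^RM$ (tensored with $N^\vee$) are maximal Cohen-Macaulay, the depth lemma applied along these sequences forces ${\rm Tor}_n^R(M,N^\vee)$ to have full depth, hence to be maximal Cohen-Macaulay. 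The main obstacle is assembling the correct exact sequences and tracking which modules are the relevant syzygy tensor products, so that the depth lemma can be applied cleanly; this is the only genuinely computational point.

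Once ${\rm Tor}_n^R(M,N^\vee)$ is known to be maximal Cohen-Macaulay, the hypotheses of Theorem \ref{exttor} now hold with $n$ replaced by $n+1$ (the Tor modules for $j=0,\ldots,n$ are each zero or maximal Cohen-Macaulay). Applying the theorem at this higher level yields ${\rm Ext}^j_R(M,N)\cong{\rm Tor}^R_j(M,N^\vee)^\vee$ for all $j=0,\ldots,n+1$, which is exactly the desired conclusion. Finally, the ``in particular'' statement ${\rm Ext}^n_R(M,N)\neq 0$ follows because ${\rm Ext}^n_R(M,N)\cong{\rm Tor}^R_n(M,N^\vee)^\vee$, and the canonical dual of a nonzero maximal Cohen-Macaulay module is nonzero (indeed $L\cong L^{\vee\vee}$ for such $L$), so the nonvanishing of ${\rm Tor}_n^R(M,N^\vee)$ transfers to ${\rm Ext}^n_R(M,N)$.
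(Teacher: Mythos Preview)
Your proposal is correct and follows essentially the same route as the paper: part (i) is read off directly from Theorem \ref{exttor}, and for part (ii) you use the depth lemma on the exact sequences obtained by tensoring the syzygy sequence $0\to\Omega_n^RM\to F\to\Omega_{n-1}^RM\to 0$ with $N^\vee$ to force ${\rm Tor}_n^R(M,N^\vee)\cong{\rm Tor}_1^R(\Omega_{n-1}^RM,N^\vee)$ to be maximal Cohen-Macaulay, and then reapply Theorem \ref{exttor} at level $n+1$. The paper writes out this four-term exact sequence and its two short-exact-sequence splitting explicitly rather than pointing to \eqref{seq1}--\eqref{seq3}, but the content is the same.
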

\begin{proof} Part (i) follows immediately from Theorem \ref{exttor}. To prove (ii),  consider a short exact sequence $$\xymatrix{0\ar[r] & \Omega_n^R M\ar[r] & F\ar[r] & \Omega^R_{n-1}M\ar[r] & 0}$$ with $F$ a free $R$-module. This yields an exact sequence $$\xymatrix{0\ar[r] & {\rm Tor}_1^R(\Omega_{n-1}^RM,N^\vee)\ar[r] & \Omega_n^R M\otimes_RN^\vee\ar[r] & F\otimes_R N^\vee\ar[r] & \Omega_{n-1}^RM\otimes_RN^\vee\ar[r] & 0}$$ 
which can be decomposed into two short exact sequences:
$$\xymatrix{0\ar[r] & {\rm Tor}_1^R(\Omega_{n-1}^RM,N^\vee)\ar[r] & \Omega_n^RM\otimes_RN^\vee\ar[r] & \Omega_n^RM\otimes_RN^\vee/{\rm Tor}_1^R(\Omega_{n-1}^RM,N^\vee)\ar[r] & 0,}$$ 
$$\xymatrix{0\ar[r] & \Omega_n^RM\otimes_RN^\vee/{\rm Tor}_1^R(\Omega_{n-1}^RM,N^\vee)\ar[r] & F\otimes_RN^\vee\ar[r] & \Omega_{n-1}^RM\otimes_RN^\vee\ar[r] & 0.}$$ It follows that the module $${\rm Tor}^R_n(M,N^\vee)\cong{\rm Tor}^R_1(\Omega_{n-1}^RM,N^\vee)$$ is maximal Cohen-Macaulay. Finally, we apply Theorem \ref{exttor}.\end{proof}

\section{Freeness criteria}

In this section, we keep applying spectral sequence methods to derive from Theorem \ref{exttor} some freeness criteria for finite modules over local rings -- a topic which has recently gained much attention, especially over Cohen-Macaulay local rings possessing a canonical module (e.g., complete local rings). We employ separately two basic tools, namely, the Auslander transpose and generalized local cohomology.

\subsection{Auslander transpose}  Recall the following well-known tool, which plays an important role in this part. Let $R$ be a local ring. Given a finite $R$-module $M$, let $$\xymatrix{\cdots\ar[r] & F_2\ar[r]^{d_2} & F_1\ar[r]^{d_1} & F_0\ar[r] & M\ar[r] & 0}$$ be a minimal free resolution of $M$. The {\it Auslander transpose} of $M$ is defined as $${\rm Tr} M={\rm coker}\, {\rm Hom}_R(d_1, R).$$
This module is uniquely determined up to isomorphism.

Our purpose here is to prove a criterion for the freeness of modules making use of the two lemmas below.

\begin{lemma}$($\cite[Proposition 3.2]{KOT}$)$\label{trlemma}
Let $R$ be a local ring and $M, N$ be finite $R$-modules, with ${\rm depth}_RN=0$. If ${\rm Ext}^1_R({\rm Tr} M,{\rm Hom}_R(M,N))=0$, then $M$ is free.
\end{lemma}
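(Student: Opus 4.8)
My plan is to convert the single $\mathrm{Ext}^1$-vanishing hypothesis into the injectivity of a concrete evaluation map, and then to contradict that injectivity, when $M$ fails to be free, by producing an explicit nonzero element of its kernel built from a first syzygy of $M$ together with the hypothesis ${\rm depth}_R N=0$.

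First I would invoke the classical Auslander--Bridger sequence. Dualizing a minimal free presentation $F_1\xrightarrow{d_1}F_0\to M\to 0$ (write $(-)^*={\rm Hom}_R(-,R)$) gives the presentation $F_0^*\xrightarrow{d_1^*}F_1^*\to {\rm Tr}\,M\to 0$; applying ${\rm Hom}_R(-,X)$ and using ${\rm Hom}_R(F_i^*,X)\cong F_i\otimes_R X$ yields, for every finite $R$-module $X$, a natural exact sequence
$$0\to {\rm Ext}^1_R({\rm Tr}\,M,X)\to M\otimes_R X\xrightarrow{\ \theta_X\ }{\rm Hom}_R(M^*,X)\to {\rm Ext}^2_R({\rm Tr}\,M,X)\to 0,$$
where $M^*={\rm Hom}_R(M,R)$ and $\theta_X(m\otimes x)(\varphi)=\varphi(m)\,x$. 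Taking $X={\rm Hom}_R(M,N)$, the hypothesis ${\rm Ext}^1_R({\rm Tr}\,M,{\rm Hom}_R(M,N))=0$ says precisely that $\theta_X$ is injective. Thus it suffices to prove the contrapositive: if $M$ is not free, then $\ker\theta_X\neq 0$.

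Assume now $M$ is not free and fix the minimal presentation above, so $d_1\neq 0$ with all matrix entries in ${\bf m}$. From ${\rm depth}_R N=0$ I record two facts: there is a nonzero socle element $y\in(0:_N{\bf m})$, and moreover ${\rm depth}_R{\rm Hom}_R(M,N)=0$, since ${\rm Hom}_R(k,{\rm Hom}_R(M,N))\cong{\rm Hom}_R(M/{\bf m}M,N)\cong{\rm Hom}_R(k,N)^{\oplus r}\neq 0$, where $k=R/{\bf m}$ and $r$ is the minimal number of generators of $M$. Representing an element of $M\otimes_R{\rm Hom}_R(M,N)$ by a tuple $(\eta_1,\dots,\eta_r)$ of maps $\eta_i\colon M\to N$ (via $\sum_i e_i\otimes\eta_i$, with $e_i$ the images of a basis of $F_0$), the condition to lie in $\ker\theta_X$ becomes $\sum_i\varphi(e_i)\eta_i=0$ in ${\rm Hom}_R(M,N)$ for all $\varphi\in M^*$. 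The idea is to exploit a nontrivial syzygy of $M$ to force these sums to vanish, while using the socle of $N$ to keep the class itself nonzero in the tensor product.

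I expect the decisive difficulty to be exactly that last requirement: guaranteeing that the constructed element survives in $M\otimes_R{\rm Hom}_R(M,N)$. The naive candidates die: multiplying a socle homomorphism $M\to(0:_N{\bf m})$ by syzygy coefficients (which lie in ${\bf m}$) kills it, and a class of the shape $\sum_i e_i\otimes\lambda_i\eta$ with $(\lambda_i)$ a syzygy collapses, since $\sum_i e_i\otimes\lambda_i\eta=(\sum_i\lambda_i e_i)\otimes\eta=0$. Overcoming this is precisely where the special shape $X={\rm Hom}_R(M,N)$ is essential (for a generic module in the second argument the analogous vanishing does not force freeness, so the self-referential choice cannot be relaxed to, say, $X=k$): one must test nonvanishing after reduction modulo ${\bf m}$ and, in the delicate case $(0:_N{\bf m})\subseteq{\bf m}N$, refine the chosen socle element through a filtration of $N$. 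Once a nonzero element of $\ker\theta_X$ is produced, it contradicts the injectivity of $\theta_X$ from the first step, and hence $M$ must be free.
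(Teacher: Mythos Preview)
The paper does not supply a proof of this lemma; it merely cites \cite[Proposition 3.2]{KOT}. So the comparison is with the intended argument, not with anything written in the paper.

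Your framework is correct: the Auslander--Bridger exact sequence identifies ${\rm Ext}^1_R({\rm Tr}\,M,X)$ with $\ker\theta_X$ for $X={\rm Hom}_R(M,N)$, and the task is to exhibit a nonzero element of $\ker\theta_X$ when $M$ is not free. However, your proposal stops short of doing this, and your diagnosis of the obstruction is off. You try to build the kernel element out of a \emph{syzygy} of $M$, observe that the obvious candidates vanish, and then gesture at a filtration argument that you do not carry out. Syzygies are the wrong tool here.

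The missing idea is this. After the easy reduction to the case where $M$ has no nonzero free direct summand (if $M=F\oplus M'$ with $F$ free, then ${\rm Tr}\,M\cong{\rm Tr}\,M'$ and ${\rm Hom}_R(M',N)$ is a summand of ${\rm Hom}_R(M,N)$, so the hypothesis passes to $M'$), every $\varphi\in M^*$ satisfies $\varphi(M)\subseteq{\bf m}$. Now fix $0\neq y\in(0:_N{\bf m})$ and define $\eta\colon M\to N$ by $\eta(\bar e_1)=y$ and $\eta(\bar e_i)=0$ for $i>1$; this is well defined because the entries of $d_1$ lie in ${\bf m}$ and ${\bf m}y=0$. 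Then $\eta\in{\rm Soc}\,X$, so for every $\varphi\in M^*$ one has $\varphi(\bar e_1)\eta=0$, i.e.\ $\bar e_1\otimes\eta\in\ker\theta_X$. For nonvanishing there is no need to reduce modulo ${\bf m}$ or worry about whether ${\rm Soc}\,N\subseteq{\bf m}N$: simply apply the evaluation map $\mu\colon M\otimes_R{\rm Hom}_R(M,N)\to N$, $m\otimes f\mapsto f(m)$, and note $\mu(\bar e_1\otimes\eta)=\eta(\bar e_1)=y\neq 0$. This is exactly where the special form $X={\rm Hom}_R(M,N)$ is used --- it supplies the map $\mu$ that certifies nonvanishing --- but the mechanism is far simpler than the filtration refinement you anticipate.
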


We denote by ${\rm pd}_RM$ the projective dimension of an $R$-module $M$, and by $M^*={\rm Hom}_R(M, R)$ its algebraic dual. We believe the spectral sequence given in the lemma below is known to the experts; at any event, we provide a self-contained natural proof. 

\begin{lemma}\label{pdss}
Let $R$ be a ring. If $M$ and $N$ are finite $R$-modules such that either ${\rm pd}_RM<\infty$ or ${\rm pd}_RN<\infty$, then there exists a first quadrant spectral sequence
$$E_2^{p,q}={\rm Tor}_p^R({\rm Ext}^q_R(M,R),N)\Rightarrow_p{\rm Ext}^{q-p}_R(M,N).$$
\end{lemma}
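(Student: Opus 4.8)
The plan is to realize both sides as the two spectral sequences of a single double complex built from projective resolutions of $M$ and $N$. Since $R$ is Noetherian and $M$ is finite, I would fix a minimal free resolution $F_\bullet \to M$ by \emph{finite} free modules, and let $Q_\bullet \to N$ be any free resolution of $N$. Writing $F^\bullet = {\rm Hom}_R(F_\bullet, R)$ for the dual cochain complex, the fact that each $F_i$ is finite free yields a natural isomorphism of complexes ${\rm Hom}_R(F_\bullet, R)\otimes_R N \cong {\rm Hom}_R(F_\bullet, N)$, together with $H^q(F^\bullet) = {\rm Ext}^q_R(M, R)$. I would then form the double complex $X^{p,q} = F^q \otimes_R Q_p$, in which the $q$-direction differential is cohomological (induced by the dual of $F_\bullet$) and the $p$-direction differential is homological (induced by $Q_\bullet$), so that the total complex is graded by the difference $q - p$.

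Next I would run the two standard spectral sequences of ${\rm Tot}(X)$. Filtering so as to take $p$-homology first: since each $F^q$ is flat and $Q_\bullet$ resolves $N$, the columns compute $F^q \otimes_R N$ in homological degree $0$ and vanish in higher degree, so this spectral sequence collapses on its first page and identifies the abutment with $H^\bullet(F^\bullet \otimes_R N) \cong H^\bullet({\rm Hom}_R(F_\bullet, N)) = {\rm Ext}^\bullet_R(M, N)$. Filtering the other way: since each $Q_p$ is flat, taking $q$-cohomology first gives $H^q(F^\bullet)\otimes_R Q_p = {\rm Ext}^q_R(M,R)\otimes_R Q_p$, and then $p$-homology produces the $E_2$-page $E_2^{p,q} = {\rm Tor}_p^R({\rm Ext}^q_R(M,R), N)$. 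Tracking total degrees, this second spectral sequence converges to ${\rm Ext}^{q-p}_R(M,N)$, which is precisely the claimed form.

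It remains to secure convergence, and this is where the hypothesis enters: if ${\rm pd}_R M < \infty$ then $F_\bullet$, hence $F^\bullet$, is bounded, so $X$ is confined to a horizontal strip; if instead ${\rm pd}_R N < \infty$ I would choose $Q_\bullet$ to be a finite free resolution, confining $X$ to a vertical strip. In either case every total degree $q-p$ meets only finitely many nonzero $X^{p,q}$, so both spectral sequences are first quadrant and strongly convergent to the common abutment ${\rm Ext}^\bullet_R(M,N)$. The main technical points to check carefully are the naturality and differential-compatibility of the isomorphism ${\rm Hom}_R(F_\bullet, R)\otimes_R N \cong {\rm Hom}_R(F_\bullet, N)$, which rests on the finiteness of each $F_i$, and the strip-boundedness that legitimizes convergence; it is exactly the finiteness of one of the two projective dimensions that supplies this boundedness, which explains why the hypothesis cannot be dropped.
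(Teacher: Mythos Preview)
Your proof is correct and follows essentially the same approach as the paper: both form the double complex $F_q^{*}\otimes_R G_p$ from projective resolutions of $M$ and $N$ and read off the two associated spectral sequences, with finiteness of one projective dimension guaranteeing that each total degree $q-p$ meets only finitely many terms so that both sequences converge to the common abutment ${\rm Ext}_R^{\bullet}(M,N)$. One small slip: you invoke a \emph{minimal} free resolution of $M$, but $R$ is not assumed local here; all that is needed (and all the paper uses) is a resolution by finitely generated free modules, which exists because $R$ is Noetherian and $M$ is finite.
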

\begin{proof}
Since $M$ or $N$ has finite projective dimension, the first quadrant double complex
$$\xymatrix@=1em{
& \vdots & \vdots & \cdots & \vdots
\\
0 & F_n^*\otimes_R G_0\ar[l]\ar[u] & F_n^*\otimes_R G_1\ar[l]\ar[u] & \cdots\ar[l] & F_n^*\otimes_R G_m\ar[l]\ar[u] & \cdots\ar[l]
\\
& \vdots & \vdots & \ddots & \vdots
\\
0 & F_1^*\otimes_R G_0\ar[l]\ar[u] & F_1^*\otimes_R G_1\ar[l]\ar[u] & \cdots\ar[l] & F_1^*\otimes_R G_m\ar[l]\ar[u] & \cdots\ar[l]
\\
0 & F_0^*\otimes_R G_0\ar[l]\ar[u] & F_0^*\otimes_R G_1\ar[l]\ar[u] & \cdots\ar[l] & F_0^*\otimes_R G_m\ar[l]\ar[u] & \cdots\ar[l]
\\
& 0\ar[u] & 0\ar[u] & \dots & 0\ar[u] & \cdots
}$$
induces two spectral sequences, where $F_\bullet$ and $G_\bullet$ are, respectively, projective resolutions of $M$ and $N$. Now the result follows from the fact that ${\rm Ext}_R^\bullet(M,N)$ can be computed through the homologies of the complex ${\rm Hom}_R(F_\bullet,R)\otimes_RN$.
\end{proof}

\medskip

Given two finite $R$-modules $M$ and $N$, we set
$$e_R(M,N):=\sup\{j\geq 0 \mid {\rm Ext}^j_R(M,N)\neq0\}.$$
The result below contributes, in particular, to \cite[Question 3.7]{KOT}. Another application of it will be given later in Corollary \ref{appl1}.

\begin{proposition}\label{attempt2}
Let $R$ be a local ring and $M$ a finite $R$-module 
satisfying $e_R({\rm Tr} M,R)=0$.
 If there exists a finite $R$-module $N$ with ${\rm depth}_RN=0$ such that ${\rm pd}_R{\rm Hom}_R(M,N)<\infty$, then $M$ is free.
\end{proposition}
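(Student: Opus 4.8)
The plan is to establish the single vanishing ${\rm Ext}^1_R({\rm Tr} M,{\rm Hom}_R(M,N))=0$ and then invoke Lemma \ref{trlemma}, whose only remaining hypothesis, ${\rm depth}_RN=0$, is already granted; this forces $M$ to be free at once. The engine for the vanishing will be the spectral sequence of Lemma \ref{pdss}, applied to a well-chosen pair of modules and combined with the hypothesis $e_R({\rm Tr} M,R)=0$.

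First I would apply Lemma \ref{pdss} with $({\rm Tr} M,{\rm Hom}_R(M,N))$ in place of $(M,N)$. This substitution is legitimate precisely because our hypothesis provides ${\rm pd}_R{\rm Hom}_R(M,N)<\infty$, which is one of the two alternatives demanded by that lemma. The outcome is a first quadrant spectral sequence
$$E_2^{p,q}={\rm Tor}_p^R({\rm Ext}^q_R({\rm Tr} M,R),{\rm Hom}_R(M,N))\Rightarrow_p{\rm Ext}^{q-p}_R({\rm Tr} M,{\rm Hom}_R(M,N)).$$

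Next I would exploit the hypothesis $e_R({\rm Tr} M,R)=0$, which says exactly that ${\rm Ext}^q_R({\rm Tr} M,R)=0$ for every $q\geq 1$. Hence $E_2^{p,q}=0$ whenever $q\geq 1$, so the second page --- and therefore every later page --- is concentrated in the single row $q=0$. No nonzero differential can enter or leave that row, so the spectral sequence degenerates at $E_2$. Reading the abutment in total degree $n=q-p$, any graded piece contributing to ${\rm Ext}^n_R({\rm Tr} M,{\rm Hom}_R(M,N))$ with $n\geq 1$ would have to sit in the row $q=0$ at $p=-n<0$, which is impossible in a first quadrant sequence. Consequently ${\rm Ext}^n_R({\rm Tr} M,{\rm Hom}_R(M,N))=0$ for all $n\geq 1$; specializing to $n=1$ delivers the vanishing we sought, and Lemma \ref{trlemma} then yields the freeness of $M$.

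The one delicate point, which I expect to be the main (if modest) obstacle, is the bookkeeping of the unusual grading $q-p$ in the degeneration step: one must check that, once the sequence is collapsed onto $q=0$, it is the first-quadrant constraint $p\geq 0$ --- rather than any accidental vanishing of the Tor groups --- that annihilates the positive-degree abutment terms. Everything else amounts to a direct feeding of the two lemmas already available.
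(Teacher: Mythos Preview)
Your proposal is correct and follows essentially the same approach as the paper: apply the spectral sequence of Lemma \ref{pdss} to the pair $({\rm Tr} M,{\rm Hom}_R(M,N))$, use $e_R({\rm Tr} M,R)=0$ to collapse it to the row $q=0$, read off $e_R({\rm Tr} M,{\rm Hom}_R(M,N))=0$, and conclude by Lemma \ref{trlemma}. Your write-up is a bit more explicit about the first-quadrant bookkeeping that forces the positive-degree abutment terms to vanish, but the argument is the same.
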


\begin{proof} Pick the spectral sequence of Lemma \ref{pdss} for ${\rm Tr} M$ and ${\rm Hom}_R(M,N)$:
$$E_2^{p,q}={\rm Tor}_p^R({\rm Ext}^q_R({\rm Tr} M,R),{\rm Hom}_R(M,N))\Rightarrow_p{\rm Ext}^{q-p}_R({\rm Tr} M,{\rm Hom}_R(M,N)).$$ From the vanishing hypothesis, we obtain $E_2^{p,q}=0$ for all $q>0.$ Therefore, by convergence,
$$e_R({\rm Tr} M,{\rm Hom}_R(M,N))=0.$$ Now we apply Lemma \ref{trlemma}. 
\end{proof}

\subsection{Another criterion}
First we invoke a useful standard fact.

\begin{lemma}{\rm (\cite[Lemma 2.2]{Yo})}\label{tor0}
Let $R$ be a local ring and $M, N$ be finite $R$-modules with ${\rm pd}_RM<\infty$ and $N$ maximal Cohen-Macaulay. Then, $M$ and $N$ are {\rm Tor}-independent.
\end{lemma}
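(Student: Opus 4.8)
The plan is to prove Lemma~\ref{tor0}, which asserts that if $M$ has finite projective dimension and $N$ is maximal Cohen-Macaulay over a local ring $R$, then $\mathrm{Tor}_j^R(M,N)=0$ for all $j\geq 1$. First I would reduce to the essential homological mechanism. Since $\mathrm{pd}_R M = d < \infty$, a minimal free resolution of $M$ has length $d$, so $\mathrm{Tor}_j^R(M,N)=0$ automatically for all $j>d$; thus only finitely many Tor-modules are in question and one can argue by descending induction, or equivalently by induction on $d$. The cleanest route is to observe that, by the Auslander--Buchsbaum formula, $\mathrm{depth}\,M = \mathrm{depth}\,R - d$, and to exploit the interplay between the depth of $M$ and the maximal Cohen-Macaulayness of $N$.

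The key step I would carry out is a rigidity/vanishing argument using depth counting along a free resolution. Concretely, I would induct on $d=\mathrm{pd}_R M$. The base case $d=0$ is trivial since $M$ is free. For the inductive step, take a short exact sequence
\[
0 \longrightarrow \Omega_1^R M \longrightarrow F_0 \longrightarrow M \longrightarrow 0
\]
with $F_0$ free, so that $\mathrm{pd}_R \Omega_1^R M = d-1$. The associated long exact sequence in Tor gives $\mathrm{Tor}_j^R(M,N)\cong \mathrm{Tor}_{j-1}^R(\Omega_1^R M, N)$ for all $j\geq 2$, and these vanish by the inductive hypothesis applied to $\Omega_1^R M$ (which has finite projective dimension $d-1$). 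The only remaining case is $j=1$, for which the long exact sequence yields an exact sequence
\[
0 \longrightarrow \mathrm{Tor}_1^R(M,N) \longrightarrow \Omega_1^R M \otimes_R N \longrightarrow F_0\otimes_R N,
\]
so that $\mathrm{Tor}_1^R(M,N)$ embeds into $\Omega_1^R M\otimes_R N$.

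The main obstacle will therefore be showing that this $j=1$ term vanishes, and this is where I expect the real work to concentrate. The natural approach is a depth estimate: since $N$ is maximal Cohen-Macaulay, tensoring a finite free resolution of $M$ with $N$ and applying the depth lemma repeatedly should force the relevant modules to retain enough depth, ruling out the associated primes that a nonzero torsion submodule would require. More precisely, I would argue that $\mathrm{Tor}_1^R(M,N)$, being the kernel of the map $\Omega_1^R M\otimes_R N \to F_0\otimes_R N$ whose image sits inside the free (hence torsion-free, maximal Cohen-Macaulay) module $F_0\otimes_R N$, must be a module supported so as to contradict the depth bounds unless it is zero; a clean way to formalize this is to pass to the localization/completion and reduce to the case where $R$ is complete, then use that a nonzero Tor-module of finite length or low depth cannot occur when all the syzygetic pieces are forced to be maximal Cohen-Macaulay by the hypothesis on $N$. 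Since this is a well-known standard fact cited from \cite{Yo}, I would expect the cleanest self-contained argument to be precisely the induction on $\mathrm{pd}_R M$ combined with the depth lemma, with the subtlety being the careful bookkeeping of depths to close off the $j=1$ case.
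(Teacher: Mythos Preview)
The paper does not prove this lemma; it is simply quoted from \cite[Lemma 2.2]{Yo}, so there is no argument in the paper to compare against. Evaluating your attempt on its own merits:

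Your induction on $d=\mathrm{pd}_R M$ correctly dispatches $\mathrm{Tor}_j^R(M,N)$ for $j\ge 2$ by shifting to $\Omega_1^R M$, but the $j=1$ case is a genuine gap, and the sketch you offer does not close it. Knowing that $\mathrm{Tor}_1^R(M,N)$ embeds in $\Omega_1^R M\otimes_R N$ gives no useful constraint: submodules can have strictly smaller depth than the ambient module, and at this stage you do not yet control the depth of $\Omega_1^R M\otimes_R N$ anyway. Your assertion that ``all the syzygetic pieces are forced to be maximal Cohen-Macaulay by the hypothesis on $N$'' is exactly what Auslander's depth formula yields \emph{after} Tor-independence is known, so invoking it here is circular. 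The appeal to completion and to ``finite length or low depth'' Tor-modules is not justified either; nothing in your setup forces $\mathrm{Tor}_1^R(M,N)$ to have depth zero.

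A correct argument works on the $N$ side rather than by taking syzygies of $M$. Since $\mathrm{depth}\,N=\dim R\ge\mathrm{depth}\,R\ge d$, one may choose $x\in\mathfrak{m}$ regular on both $R$ and $N$; the long exact sequence of $0\to N\xrightarrow{x}N\to N/xN\to 0$, together with $\mathrm{Tor}_{d+1}^R(M,-)=0$, shows that $x$ acts injectively on the top nonvanishing Tor, and one then reduces modulo $x$ (changing rings) and invokes Nakayama. Alternatively, apply the Peskine--Szpiro acyclicity lemma to the complex $F_\bullet\otimes_R N$, using that $\mathrm{depth}(F_i\otimes_R N)=\mathrm{depth}\,N\ge i$ for each $i\le d$. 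Either route supplies the missing idea; the ``depth bookkeeping along syzygies of $M$'' that you propose does not.
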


Our freeness criterion in this part is as follows. Applications will be provided in Subsection \ref{Vascon}.

\begin{corollary}\label{lmweak00}
Let $R$ be a Cohen-Macaulay local ring possessing a canonical module, and $M, N$ be finite $R$-modules with $N$ maximal Cohen-Macaulay. Suppose
$${\rm depth}\, \Omega_1^RM\otimes_RN^{\vee}\neq {\rm depth}\,M\otimes_RN^{\vee}+1$$ $($this holds, e.g., if $M\otimes_RN^{\vee}$ is maximal Cohen-Macaulay$)$. If ${\rm pd}_RM<\infty$, then $M$ is free.
\end{corollary}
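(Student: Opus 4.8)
The plan is to reduce everything to showing that the depth hypothesis forces $M\otimes_RN^{\vee}$ to be maximal Cohen-Macaulay, and then to conclude by combining a vanishing-of-Ext statement already at our disposal with the nonvanishing of top Ext. Arguing by contradiction, suppose $M$ is not free, so that $p:={\rm pd}_RM\geq 1$ and $\Omega_1^RM\neq 0$. Since $N$ is maximal Cohen-Macaulay it is in particular nonzero, hence so is $N^{\vee}$, which is again maximal Cohen-Macaulay. As ${\rm pd}_RM<\infty$ and $N^{\vee}$ is maximal Cohen-Macaulay, Lemma \ref{tor0} shows that $M$ and $N^{\vee}$ are Tor-independent; in particular ${\rm Tor}_1^R(M,N^{\vee})=0$.

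The heart of the argument -- and the step I expect to be the main obstacle -- is to read maximal Cohen-Macaulayness of $M\otimes_RN^{\vee}$ out of the technical depth inequality. To do this I would tensor the syzygy sequence $0\to\Omega_1^RM\to F_0\to M\to 0$ (with $F_0$ free) by $N^{\vee}$; the vanishing of ${\rm Tor}_1^R(M,N^{\vee})$ renders the map $\Omega_1^RM\otimes_RN^{\vee}\to F_0\otimes_RN^{\vee}$ injective, producing the short exact sequence
$$\xymatrix{0\ar[r] & \Omega_1^RM\otimes_RN^{\vee}\ar[r] & F_0\otimes_RN^{\vee}\ar[r] & M\otimes_RN^{\vee}\ar[r] & 0.}$$
Its middle term is a finite direct sum of copies of $N^{\vee}$, hence maximal Cohen-Macaulay of depth $d:={\rm dim}\,R$. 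Setting $a={\rm depth}\,\Omega_1^RM\otimes_RN^{\vee}$ and $c={\rm depth}\,M\otimes_RN^{\vee}$, the depth lemma yields $a\geq\min\{d,c+1\}$ and $c\geq\min\{a-1,d\}$. Since $a\leq d$, the second inequality gives $a\leq c+1$; and if $c<d$ the first gives $a\geq c+1$, forcing $a=c+1$. Thus the standing hypothesis $a\neq c+1$ excludes the case $c<d$, leaving $c=d$, i.e. $M\otimes_RN^{\vee}$ is maximal Cohen-Macaulay.

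It then remains to harvest the contradiction. At this point $N$ and $M\otimes_RN^{\vee}$ are both maximal Cohen-Macaulay and $M,N^{\vee}$ are Tor-independent, so Corollary \ref{lmweak0}(ii) gives ${\rm Ext}^j_R(M,N)=0$ for every $j\geq 1$. On the other hand, since $p={\rm pd}_RM\geq 1$ and $N\neq 0$, minimality of a free resolution of $M$ together with Nakayama's lemma forces the top cohomology ${\rm Ext}^p_R(M,N)$ to be nonzero. This contradiction shows $p=0$, i.e. $M$ is free. The parenthetical special case needs no separate treatment: if $M\otimes_RN^{\vee}$ is maximal Cohen-Macaulay then $c=d$, and as $a\leq d<d+1=c+1$ the inequality $a\neq c+1$ holds automatically.
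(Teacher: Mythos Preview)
Your argument is correct and follows essentially the same route as the paper's proof: both use Lemma~\ref{tor0} to obtain Tor-independence, tensor the syzygy sequence with $N^{\vee}$, apply the depth lemma to force $M\otimes_RN^{\vee}$ to be maximal Cohen-Macaulay, invoke Corollary~\ref{lmweak0}(ii), and finish via the nonvanishing of ${\rm Ext}^{{\rm pd}_RM}_R(M,N)$ for $N\neq 0$ (the paper cites \cite[p.\,154, Lemma 1(iii)]{Matsu} for this last step, while you reprove it directly from minimality and Nakayama). The only cosmetic difference is that you spell out both depth-lemma inequalities to pin down $a=c+1$ when $c<d$, whereas the paper appeals to the standard sharpened form of the depth lemma in one stroke.
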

\begin{proof} Notice that $N^{\vee}$ is also maximal Cohen-Macaulay and recall ${\rm pd}_RM<\infty$. In this setting, we claim that the $R$-module $M\otimes_RN^{\vee}$ is, necessarily, maximal Cohen-Macaulay. Indeed, suppose by way of contradiction that ${\rm depth}\,M\otimes_RN^{\vee}<d={\rm dim}\,R$. First, we can apply Lemma \ref{tor0} to obtain that $M$ and $N^{\vee}$ are Tor-independent. In particular, ${\rm Tor}_1^R(M, N^{\vee})=0$. Therefore, tensoring with $N^{\vee}$ a minimal free presentation $\xymatrix@=1em{0\ar[r] & \Omega_1^R M\ar[r] & F_0\ar[r] & M\ar[r] & 0}$ yields a short exact sequence $$\xymatrix{0\ar[r] &  \Omega_1^R M\otimes_RN^\vee\ar[r] & F_0\otimes_R N^\vee\ar[r] & M\otimes_RN^\vee\ar[r] & 0.}$$ Since ${\rm depth}\,M\otimes_RN^{\vee}<d={\rm depth}\,F_0\otimes_RN^{\vee}$, the well-known depth lemma forces the depth of $\Omega_1^RM\otimes_RN^{\vee}$ to coincide with ${\rm depth}\,M\otimes_RN^{\vee}+1$, contradicting our hypothesis. Thus, $M\otimes_RN^{\vee}$ is maximal Cohen-Macaulay. Now, Corollary \ref{lmweak0}(ii) gives $${\rm Ext}^j_R(M,N)=0 \quad \mbox{for\, all} \quad j\geq 1,$$ which, by \cite[p.\,154, Lemma 1(iii)]{Matsu} along with the hypothesis ${\rm pd}_RM<\infty$, forces $M$ to be free.
\end{proof}

\subsection{Generalized local cohomology}\label{dim1section} This part is the core of the section. The crucial tool here is the following notion.

\begin{definition}$($\cite{H}, \cite{S}$)$ \rm
 Let $R$ be a ring, $I$ be an ideal of $R$, and $M, N$ be finite $R$-modules. For an integer $i\geq 0$, the {\it $i$-th local cohomology module} of the pair $M, N$ with respect to $I$ is defined as
$$H_{I}^i(M,N)=\varinjlim_n{\rm Ext}^i_R(M/{I}^nM,N).$$ Notice that by letting $M=R$ we retrieve the ordinary local cohomology module $H_I^i(N)$.
\end{definition}

\begin{lemma}{\rm (\cite[Proposition 2.1]{FJMS})}\label{ss2}
 For $R$-modules $M, N$, with $M$ finite, there are spectral sequences:
\begin{itemize}
    \item [(i)]  ${\rm Ext}^p_R(M,H^q_{I}(N))\Rightarrow_p H^{p+q}_{I}(M,N)$; 
    \item [(ii)]  $H^p_{I}({\rm Ext}_R^q(M,N))\Rightarrow_p H^{p+q}_{I}(M,N)$.
    \end{itemize}
\end{lemma}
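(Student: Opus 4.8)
The plan is to realize both spectral sequences as the two Grothendieck spectral sequences attached to a composition of functors, in the same spirit as the bicomplex mechanism underlying Lemma \ref{pdss}. Write $\Gamma_I(-)=H^0_I(-)=\varinjlim_n(0:_{(-)}I^n)$ for the $I$-torsion functor. The starting point is the natural identification, valid because $M$ is finite so that ${\rm Hom}_R(M,-)$ commutes with the direct limit defining torsion,
$$\Gamma_I(M,N)=\varinjlim_n{\rm Hom}_R(M/I^nM,N)\cong{\rm Hom}_R(M,\Gamma_I(N))\cong\Gamma_I({\rm Hom}_R(M,N)),$$
where the last isomorphism uses $(0:_{{\rm Hom}_R(M,N)}I^n)={\rm Hom}_R(M,(0:_NI^n))$. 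Since direct limits are exact and commute with cohomology, computing the cohomology of an injective resolution of $N$ shows that $H^i_I(M,-)$ is precisely the $i$-th right derived functor of $N\mapsto\Gamma_I(M,N)$. Hence both composites ${\rm Hom}_R(M,-)\circ\Gamma_I$ and $\Gamma_I\circ{\rm Hom}_R(M,-)$ have derived functors converging to $H^{p+q}_I(M,N)$.

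For part (i) I would apply the Grothendieck spectral sequence to ${\rm Hom}_R(M,-)\circ\Gamma_I$. The only hypothesis to check is that $\Gamma_I$ sends injectives to ${\rm Hom}_R(M,-)$-acyclic modules; this is immediate since over a Noetherian ring $\Gamma_I(E)$ is injective whenever $E$ is, so that ${\rm Ext}^i_R(M,\Gamma_I(E))=0$ for all $i\geq1$. The spectral sequence then reads ${\rm Ext}^p_R(M,H^q_I(N))\Rightarrow H^{p+q}_I(M,N)$, which is exactly (i).

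For part (ii) I would apply the Grothendieck spectral sequence to the other composite $\Gamma_I\circ{\rm Hom}_R(M,-)$, yielding $H^p_I({\rm Ext}^q_R(M,N))\Rightarrow H^{p+q}_I(M,N)$. Here the acyclicity condition to verify, and this is the main obstacle, is that ${\rm Hom}_R(M,E)$ is $\Gamma_I$-acyclic for every injective $E$, i.e. $H^p_I({\rm Hom}_R(M,E))=0$ for $p\geq1$; note that ${\rm Hom}_R(M,E)$ need not itself be injective. To handle this I would use $H^p_I(X)=\varinjlim_n{\rm Ext}^p_R(R/I^n,X)$ together with the adjunction isomorphism, valid since $E$ is injective,
$${\rm Ext}^p_R(R/I^n,{\rm Hom}_R(M,E))\cong{\rm Hom}_R({\rm Tor}^R_p(R/I^n,M),E).$$
The crucial point is then that, by the Artin--Rees lemma, for $p\geq1$ the inverse system $\{{\rm Tor}^R_p(R/I^n,M)\}_n$ is pro-zero: for each $n$ there is some $m\geq n$ for which the transition map ${\rm Tor}^R_p(R/I^m,M)\to{\rm Tor}^R_p(R/I^n,M)$ vanishes. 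Applying the contravariant functor ${\rm Hom}_R(-,E)$ turns these into eventually-zero transition maps in the associated direct system, so the direct limit vanishes and ${\rm Hom}_R(M,E)$ is $\Gamma_I$-acyclic, as required. I expect establishing this pro-vanishing cleanly to be the delicate step, while the remainder is the formal machinery of Grothendieck spectral sequences.
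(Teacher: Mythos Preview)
The paper does not supply its own proof of this lemma; it simply quotes the result from \cite[Proposition 2.1]{FJMS}. Your argument is correct and is precisely the standard one: realize $H^{\bullet}_{I}(M,-)$ as the right derived functors of $\Gamma_I(M,-)\cong{\rm Hom}_R(M,\Gamma_I(-))\cong\Gamma_I({\rm Hom}_R(M,-))$ and run the two Grothendieck spectral sequences. The only nontrivial verification, as you correctly isolate, is the $\Gamma_I$-acyclicity of ${\rm Hom}_R(M,E)$ for $E$ injective in part~(ii); your Artin--Rees argument showing that the pro-system $\{{\rm Tor}^R_p(R/I^n,M)\}_n$ is essentially zero for $p\geq 1$, and hence that the dual direct limit vanishes, is the standard and correct way to handle this. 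This is almost certainly the route taken in the cited reference as well, so there is nothing further to compare.
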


\begin{definition}\rm
The {\it cohomological dimension} of a pair of $R$-modules $M, N$, with respect to a proper ideal $I$ of $R$, is defined as
$$\cd_I(M,N)=\sup\{j\geq 0 \mid H^j_{I}(M,N)\neq 0\}.$$ Clearly, taking $M=R$, we retrieve the ordinary cohomological dimension $\cd_IN$. Also notice that this generalizes $e_R(M, N)$ since this quantity can be expressed as $\cd_{(0)}(M,N)$.
\end{definition}

Next, given a local ring $(R, {\bf m})$ and a finite $R$-module $M$, the complete intersection dimension of $M$ is denoted $\cidim_RM$. Recall that $$\cidim_RM\leq {\rm pd}_RM,$$ with equality if ${\rm pd}_RM<\infty$. Moreover, each finite $R$-module $M$ satisfies $\cidim_RM<\infty$ if and only if $R$ is a complete intersection ring (i.e., the completion of $R$ in the ${\bf m}$-adic topology is isomorphic to the quotient of a regular local ring by an ideal generated by a regular sequence), and in this case $\cidim_RM={\rm depth}\,R-{\rm depth}_RM$. For details, we refer to \cite{AGP}. 

\begin{lemma}$($\cite[Theorem 4.2]{AB}$)$\label{cidim}
 Let $R$ be a local ring and $M$ be a finite $R$-module with $\cidim_RM<\infty$. Then, ${\rm pd}_RM<\infty$ if and only if ${\rm Ext}^{2s}_R(M,M)=0$ for some  integer $s\geq 1$.
\end{lemma}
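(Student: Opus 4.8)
The forward implication is immediate: if ${\rm pd}_RM=p<\infty$, then ${\rm Ext}^j_R(M,M)=0$ for every $j>p$, so ${\rm Ext}^{2s}_R(M,M)=0$ for any $s$ with $2s>p$. The substance is the converse, and the plan is to import the support-variety machinery attached to the hypothesis $\cidim_RM<\infty$. First I would reduce to the case of a complete intersection. By the theory of complete intersection dimension, $\cidim_RM<\infty$ yields a quasi-deformation $R\to R'\twoheadleftarrow Q$, with $R\to R'$ flat local and $R'=Q/(f_1,\dots,f_c)$ for a $Q$-regular sequence, such that ${\rm pd}_Q(M')<\infty$, where $M'=M\otimes_RR'$. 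Since $R\to R'$ is faithfully flat and $M$ is finite, flat base change gives ${\rm Ext}^j_{R'}(M',M')\cong{\rm Ext}^j_R(M,M)\otimes_RR'$ for all $j$ and ${\rm pd}_{R'}M'={\rm pd}_RM$; hence the hypothesis ${\rm Ext}^{2s}_R(M,M)=0$ passes to $M'$, and it suffices to prove ${\rm pd}_{R'}M'<\infty$. I therefore assume from now on that $R=Q/(f_1,\dots,f_c)$ with ${\rm pd}_QM<\infty$.

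Next I would invoke the Eisenbud--Gulliksen cohomology operators $\chi_1,\dots,\chi_c$, of cohomological degree $2$, associated with the regular sequence $f_1,\dots,f_c$. These are central in the graded Yoneda ring $\mathcal E:={\rm Ext}^*_R(M,M)=\bigoplus_{j\ge0}{\rm Ext}^j_R(M,M)$ and endow it with the structure of a graded algebra over the polynomial ring $\mathcal S:=R[\chi_1,\dots,\chi_c]$; by Gulliksen's finiteness theorem, which applies precisely because ${\rm pd}_QM<\infty$, the module $\mathcal E$ is finitely generated over $\mathcal S$. Writing $\phi\colon\mathcal S\to\mathcal E$ for the structure map (so $\phi(\chi_i)=\chi_i\cdot{\rm id}_M$), the fact that $\mathcal E$ is an $\mathcal S$-algebra with unit ${\rm id}_M$ forces ${\rm ann}_{\mathcal S}(\mathcal E)=\ker\phi={\rm ann}_{\mathcal S}({\rm id}_M)$.

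Here is the crux. Because $\chi_i^s\cdot{\rm id}_M$ lies in ${\rm Ext}^{2s}_R(M,M)$, the single vanishing ${\rm Ext}^{2s}_R(M,M)=0$ forces $\phi(\chi_i^s)=0$ for every $i$, that is, $(\chi_1^s,\dots,\chi_c^s)\subseteq\ker\phi={\rm ann}_{\mathcal S}(\mathcal E)$. Consequently $\mathcal E$ is a finitely generated module over $\mathcal S/(\chi_1^s,\dots,\chi_c^s)$, which is free of finite rank over $R$; thus $\mathcal E$ is a finite $R$-module, and being graded it can be nonzero in only finitely many cohomological degrees. In other words ${\rm Ext}^j_R(M,M)=0$ for all $j\gg0$, so the complexity of $M$ vanishes, $\mathrm{cx}_RM=0$. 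Finally, complexity zero is equivalent to ${\rm pd}_RM<\infty$, which completes the argument after transporting the conclusion back along the quasi-deformation.

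The main obstacle I anticipate is not the elementary annihilator computation but the two imported pillars of the complete-intersection cohomology theory: the centrality and finiteness of the cohomology operators (Gulliksen), and the equivalence between vanishing of ${\rm Ext}^j_R(M,M)$ for $j\gg0$, vanishing of the complexity, and finiteness of projective dimension. Both are available for modules of finite complete intersection dimension through the quasi-deformation formalism, and it is exactly the reduction to a complete intersection that makes them applicable; verifying that the chosen complexity/support-variety dictionary is compatible with the flat base change is the one point requiring care.
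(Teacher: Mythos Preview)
The paper does not prove this lemma at all; it is stated with a bare citation to \cite[Theorem 4.2]{AB} and used as a black box. Your sketch is essentially a reconstruction of the Avramov--Buchweitz argument: reduce via a quasi-deformation, use the degree-$2$ Eisenbud--Gulliksen operators together with Gulliksen's finiteness theorem, and exploit that the single vanishing ${\rm Ext}^{2s}_R(M,M)=0$ forces each $\chi_i^s$ to annihilate the graded ${\rm Ext}$-algebra. That strategy is correct.

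One point deserves tightening. After you deduce that ${\rm Ext}^j_R(M,M)=0$ for all $j\gg 0$, you write ``so the complexity of $M$ vanishes.'' Complexity is measured by the growth of ${\rm Ext}^j_R(M,k)$, not of ${\rm Ext}^j_R(M,M)$, so this inference is not immediate; you then patch this by citing, as a second ``imported pillar,'' the equivalence between eventual vanishing of ${\rm Ext}^*_R(M,M)$ and finiteness of ${\rm pd}_RM$. That equivalence is itself part of the Avramov--Buchweitz support-variety package (essentially the equality $V^*(M,M)=V^*(M)$), so invoking it here skirts close to what you are trying to establish. The cleaner route, which avoids any appearance of circularity, is to observe that the $\mathcal S$-action on ${\rm Ext}^*_R(M,k)$ factors through the ring map $\phi\colon\mathcal S\to\mathcal E={\rm Ext}^*_R(M,M)$ (via the right $\mathcal E$-module structure on ${\rm Ext}^*_R(M,k)$). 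Hence the same elements $\chi_i^s\in\ker\phi$ annihilate ${\rm Ext}^*_R(M,k)$ as well; Gulliksen finiteness then gives ${\rm Ext}^*_R(M,k)$ finite over $R$, hence finite-dimensional over $k$, which yields ${\rm pd}_RM<\infty$ directly. With this adjustment your argument is self-contained modulo Gulliksen's theorem and the standard properties of the cohomology operators, exactly as in \cite{AB}.
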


\begin{theorem}\label{freeness}
Let $R$ be a ring, $I$ be a proper ideal of $R$, and $M, N$ be $R$-modules with $M$ finite and  ${\rm dim}\,N\leq 1$. Consider the following assertions:
\begin{itemize}
    \item [(i)] $N$ is free;
    \item [(ii)] $e_R(M,H^0_{I}(N))<\infty$ and $e_R(M,H^1_{I}(N))<\infty$;
    \item [(iii)] $\cd_{I}(M, N)<\infty$;
    \item [(iv)] ${\rm Ext}^{j}_R(M,H^1_{I}(N))\cong{\rm Ext}^{j+2}_R(M,H^0_{I}(N))$ for all $j\gg 0$.
\end{itemize} Then, ${\rm (i)}\Rightarrow{\rm (ii)}\Rightarrow{\rm (iii)}\Rightarrow{\rm (iv)}$. Moreover, if in addition $R$ is local, $M=N$, ${\rm Ext}^1_R(M,M)=0$ and $\cidim_RM\leq 1$, then the first three assertions {\rm (i), (ii)} and {\rm (iii)} are equivalent.
\end{theorem}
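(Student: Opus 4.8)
The plan is to prove the chain of implications first, exploiting the two spectral sequences of Lemma \ref{ss2} to transfer finiteness information about generalized local cohomology of the pair $(M,N)$ into finiteness and periodicity statements about the Ext modules, and then to handle the converse direction in the local case $M=N$ by feeding the output into the complete intersection dimension machinery of Lemma \ref{cidim}.

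First I would dispatch $\mathrm{(i)}\Rightarrow\mathrm{(ii)}\Rightarrow\mathrm{(iii)}$. If $N$ is free, then $H^q_I(N)$ is a direct sum of copies of $H^q_I(R)$, and since $\dim N\le 1$ only $q=0,1$ can contribute; moreover the $\mathrm{Ext}^j_R(M,-)$ against such modules vanishes for $j\gg 0$ (for a free $N$ the pair cohomology reduces to ordinary $H^j_I(M,N)$, which is bounded because $\dim N\le 1$), giving $\mathrm{(ii)}$. For $\mathrm{(ii)}\Rightarrow\mathrm{(iii)}$, I would read the spectral sequence of Lemma \ref{ss2}(i), namely $E_2^{p,q}=\mathrm{Ext}^p_R(M,H^q_I(N))\Rightarrow H^{p+q}_I(M,N)$; since $\dim N\le 1$ forces $H^q_I(N)=0$ for $q\ge 2$, only the two rows $q=0,1$ survive, and the hypothesis $\mathrm{(ii)}$ says each of these rows has only finitely many nonzero columns. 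Hence every $E_2^{p,q}$ with $p+q$ large vanishes, so by convergence $H^{p+q}_I(M,N)=0$ for $p+q\gg 0$, which is exactly $\cd_I(M,N)<\infty$.

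Next, $\mathrm{(iii)}\Rightarrow\mathrm{(iv)}$: again in the two-row spectral sequence the only possibly nonzero differentials out of or into $E_r^{p,1}$ and $E_r^{p,0}$ are the $d_2$ maps $E_2^{p,1}\to E_2^{p+2,0}$, since all other targets and sources lie in the vanishing rows. Thus the spectral sequence degenerates at $E_3$ after these $d_2$'s, and for $p+1\gg 0$ the abutment $H^{p+1}_I(M,N)$ vanishes by $\mathrm{(iii)}$; the resulting $E_3$-page vanishing then shows the $d_2$ maps $\mathrm{Ext}^p_R(M,H^1_I(N))\to\mathrm{Ext}^{p+2}_R(M,H^0_I(N))$ are isomorphisms for all large $p$, which is precisely assertion $\mathrm{(iv)}$ after reindexing $j=p$.

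Finally, for the equivalence of the first three assertions under the extra hypotheses ($R$ local, $M=N$, $\mathrm{Ext}^1_R(M,M)=0$, $\cidim_RM\le 1$), it suffices to close the loop by proving $\mathrm{(iii)}\Rightarrow\mathrm{(i)}$. Here the idea is to extract, from $\cd_I(M,M)<\infty$, the vanishing of some even self-extension $\mathrm{Ext}^{2s}_R(M,M)$ so that Lemma \ref{cidim} forces $\mathrm{pd}_RM<\infty$, whence $\cidim_RM=\mathrm{pd}_RM$ and the bound $\cidim_RM\le 1$ together with freeness criteria complete the argument. I expect this implication to be the main obstacle: one must use the \emph{second} spectral sequence, Lemma \ref{ss2}(ii), $H^p_I(\mathrm{Ext}^q_R(M,M))\Rightarrow H^{p+q}_I(M,M)$, to relate the boundedness of the abutment to the behavior of the ordinary Ext modules $\mathrm{Ext}^q_R(M,M)$, and combine the vanishing of $\mathrm{Ext}^1_R(M,M)$ with the periodicity forced by $\cidim_RM\le 1$ (finite CI-dimension yields eventual two-periodicity of $\mathrm{Ext}^\bullet_R(M,M)$) to pin down an even index $2s$ at which the self-extension vanishes. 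The delicate point is tracking how $\cd_I$ finiteness of the pair survives the passage through local cohomology in the $p$-direction without destroying the needed Ext-vanishing; once a single $\mathrm{Ext}^{2s}_R(M,M)=0$ is secured, Lemma \ref{cidim} finishes the freeness conclusion.
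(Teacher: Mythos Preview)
Your treatment of the chain $\mathrm{(i)}\Rightarrow\mathrm{(ii)}\Rightarrow\mathrm{(iii)}\Rightarrow\mathrm{(iv)}$ via the two-row spectral sequence of Lemma~\ref{ss2}(i) is essentially the paper's argument (the paper packages the two-row degeneration as a long exact sequence, but this is equivalent to your $d_2$-isomorphism reading).

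The gap is in $\mathrm{(iii)}\Rightarrow\mathrm{(i)}$. You propose to combine the second spectral sequence with \emph{periodicity} of $\mathrm{Ext}^\bullet_R(M,M)$ coming from $\cidim_RM<\infty$, and to use $\mathrm{Ext}^1_R(M,M)=0$ to locate a vanishing $\mathrm{Ext}^{2s}$. This does not go through as stated: eventual $2$-periodicity says nothing about the low-degree module $\mathrm{Ext}^1$, so $\mathrm{Ext}^1=0$ cannot be propagated into the periodic range this way. More importantly, you have not exploited the decisive structural feature of the second spectral sequence. Since $\dim\mathrm{Ext}^q_R(M,M)\le\dim M\le 1$, Grothendieck vanishing kills $H^p_I(\mathrm{Ext}^q_R(M,M))$ for $p\ge 2$, so the sequence $H^p_I(\mathrm{Ext}^q_R(M,M))\Rightarrow H^{p+q}_I(M,M)$ has only \emph{two columns} and degenerates at $E_2$. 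Convergence then yields short exact sequences
\[
0\to H^1_I(\mathrm{Ext}^{j-1}_R(M,M))\to H^j_I(M,M)\to H^0_I(\mathrm{Ext}^j_R(M,M))\to 0,
\]
and $\cd_I(M,M)<\infty$ forces $H^0_I(\mathrm{Ext}^i_R(M,M))=H^1_I(\mathrm{Ext}^i_R(M,M))=0$ for all $i\gg 0$. But a module of dimension $\le 1$ whose $0$th and $1$st local cohomology both vanish is zero, so in fact $\mathrm{Ext}^i_R(M,M)=0$ for \emph{all} $i\gg 0$; no periodicity is needed. Now Lemma~\ref{cidim} gives $\mathrm{pd}_RM<\infty$, hence $\mathrm{pd}_RM=\cidim_RM\le 1$, and by \cite[p.\,154, Lemma~1(iii)]{Matsu} one has $e_R(M,M)=\mathrm{pd}_RM\le 1$. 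It is only \emph{here}, at the very end, that $\mathrm{Ext}^1_R(M,M)=0$ enters: it forces $e_R(M,M)=0$, i.e.\ $M$ is free.
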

\begin{proof}
Clearly (i) $\Rightarrow$ (ii). Since $M$ is finite, we consider the spectral sequence given in Lemma \ref{ss2}(i),
$$E_2^{p,q}={\rm Ext}^p_R(M,H^q_{I}(N))\Rightarrow_p H^{p+q}_{I}(M, N).$$ Grothendieck's vanishing theorem guarantees that $$H^q_{I}(N)=0 \quad \mbox{for\, all} \quad q\geq 2$$ (even when $N$ is not finite; see \cite[Theorem 6.1.2]{Brod}), so that $E$ has only two (possibly) non-zero rows
$$\xymatrix{
0 & 0 & 0 & 0 & \cdots
\\
E_2^{0,1}\ar[rrd] & E_2^{1,1}\ar[rrd] & E_2^{2,1} & E_2^{3,1} & \cdots
\\
E_2^{0,0} & E_2^{1,0} & E_2^{2,0} & E_2^{3,0} & \cdots
}$$ Therefore, there exist exact sequences 
$$\xymatrix{
0\ar[r] & E_\infty^{p,1}\ar[r] & E_2^{p,1}\ar[r] & E_2^{p+2,0}\ar[r] & E_\infty^{p+2,0}\ar[r] & 0,}$$
$$\xymatrix{
0\ar[r] & E_\infty^{p+2,0}\ar[r] & H^{p+2}\ar[r] & E_\infty^{p+1,1}\ar[r] & 0,}$$ where $H^i=H^i_{I}(M, N)$ for all $i\geq0$. Hence, by splicing these exact sequences, we derive a diagram
$$\xymatrix@=1em{
&&0\ar[d]
\\
\cdots\ar[r] & E_2^{p+1,0}\ar[r]\ar@{-->}[rd] & E_\infty^{p+1,0}\ar[r]\ar[d] & 0
\\
&& H^{p+1}\ar[d]\ar@{-->}[rd] \\
&0\ar[r]& E_\infty^{p,1}\ar[d]\ar[r] & E_2^{p,1}\ar[r] & E_2^{p+2,0}\ar[r]\ar@{-->}[rd] & E_\infty^{p+2,0}\ar[r]\ar[d] & 0 \\ &&0&&& H^{p+2}\ar[d]\ar@{-->}[rd]
\\ &&&&&\vdots&}$$ so that the broken arrows yield a long exact sequence 
$$\xymatrix@=1em{\cdots\ar[r] & {\rm Ext}^{p+1}_R(M,H^0_{I}(N))\ar[r] & H^{p+1}_{I}(M, N)\ar[r] & {\rm Ext}^{p}_R(M,H^1_{I}(N))\ar[dll] & \\& {\rm Ext}^{p+2}_R(M,H^0_{I}(N))\ar[r] & H^{p+2}_{I}(M, N)\ar[r] & \cdots}$$
which, in turn, enables us to conclude that (ii) $\Rightarrow$ (iii) $\Rightarrow$ (iv).

Finally, assume that (iii) holds, $R$ is local, $M=N$, ${\rm Ext}^1_R(M,M)=0$ and $\cidim_RM\leq 1$. Let us prove that $M$ is free. Consider the spectral sequence
$$E_2^{p,q}=H^p_{I}({\rm Ext}^q_R(M,M))\Rightarrow_p H^{p+q}_{I}(M,M)$$
given in Lemma \ref{ss2}(ii). As ${\rm dim}\,M\leq 1$, we have $$E_2^{p,q}=0 \quad \mbox{for\, all} \quad p\geq 2,$$ so that $E_2^{p,q}=E_\infty^{p,q}$ and $E$ has only two columns. By convergence, there is a short exact sequence
$$\xymatrix@=1em{0\ar[r] & H^1_{I}({\rm Ext}^{j-1}_R(M,M))\ar[r] & H^j_{I}(M,M)\ar[r] & H^0_{I}({\rm Ext}^j_R(M,M))\ar[r] & 0}$$ for each $j\geq 1$.
Now, by hypothesis, $H^j_{I}(M,M)=0$ for all $j\gg0$, which gives
$$H^0_{I}({\rm Ext}^i_R(M,M))=H^1_{I}({\rm Ext}^i_R(M,M))=0 \quad \mbox{for\, all} \quad i\gg 0.$$ As ${\rm dim}\,{\rm Ext}^i_R(M,M)\leq {\rm dim}\,M\leq 1$, this forces $${\rm Ext}^i_R(M,M)=0 \quad \mbox{for\, all} \quad i\gg 0,$$ and then Lemma \ref{cidim} ensures that ${\rm pd}_RM<\infty$. Hence ${\rm pd}_RM=\cidim_RM\leq 1$, which, by a well-known device (see \cite[p.\,154, Lemma 1(iii)]{Matsu}), yields 
$$e_R(M,M)={\rm pd}_RM\leq 1.$$ Because ${\rm Ext}^1_R(M,M)=0$, we conclude  $e_R(M,M)=0$, so that $M$ is free.
\end{proof}

\begin{remark}\rm Clearly, if in addition $N$ is Cohen-Macaulay with respect to $I$ (meaning $H_I^i(N)=0$ for all $i\neq {\rm cd}_IN$, which is a generalization of the usual Cohen-Macaulay property over local rings), then assertion (ii) can be replaced with $$e_R(M,H^{{\rm cd}_IN}_{I}(N))<\infty.$$ \end{remark}

\begin{remark}\rm For finite $R$-modules $M, N$, we point out that if $e_R(M, N)<\infty$ then ${\rm cd}_I(M, N)< \infty$ for any proper ideal $I$ of $R$. This is an immediate consequence of Lemma \ref{ss2}(ii). Now it seems natural to ask whether condition (ii) of Theorem \ref{freeness} implies $e_R(M, N)<\infty$ (this is trivial if $I=(0)$).
    
\end{remark}

\begin{corollary}\label{CM-tensor-Tor}
Let $R$ be a Cohen-Macaulay local ring possessing a canonical module, and let $M$ be a maximal Cohen-Macaulay $R$-module such that $M\otimes_R M^\vee$ is also maximal Cohen-Macaulay and ${\rm Tor}^R_1(M,M^\vee)=0$. If $\cidim_RM\leq 1$ and ${\rm cd}_I(M, M)< \infty$ for some proper ideal $I$ of $R$, then $M$ is free.
\end{corollary}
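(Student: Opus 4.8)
The plan is to deduce the hypotheses of Theorem \ref{freeness} (final clause) from the given data, and then invoke that theorem directly. Specifically, I want to verify that the conditions ``$R$ local, $M=N$, ${\rm Ext}^1_R(M,M)=0$, $\cidim_RM\leq 1$, and ${\rm cd}_I(M,M)<\infty$'' all hold, since those are exactly what the equivalence ${\rm (i)}\Leftrightarrow{\rm (iii)}$ in Theorem \ref{freeness} requires in order to conclude that $M$ is free. The ideal dimension hypothesis ${\rm dim}\,N={\rm dim}\,M\leq 1$ is not assumed here, so that is precisely the gap I will need to close, and closing it is where the other corollaries of Section 2 come in.

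First I would use the Ext-Tor machinery to produce the vanishing ${\rm Ext}^1_R(M,M)=0$. We are given that $M$ is maximal Cohen-Macaulay, that $M\otimes_R M^\vee$ is maximal Cohen-Macaulay, and that ${\rm Tor}^R_1(M,M^\vee)=0$. Taking $N=M$ and $n=1$ in Corollary \ref{lmweak0}(i) (whose hypotheses are exactly ``$N$ and $M\otimes_RN^\vee$ maximal Cohen-Macaulay'' together with the relevant Tor-vanishing), I obtain ${\rm Ext}^1_R(M,M)=0$, as needed. So two of the five hypotheses of the final clause of Theorem \ref{freeness} are now in hand, and $\cidim_RM\leq 1$ and ${\rm cd}_I(M,M)<\infty$ are assumed outright.

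The main obstacle is the dimension-one requirement built into Theorem \ref{freeness}: that theorem assumes ${\rm dim}\,N\leq 1$, but here we only know $\cidim_RM\leq 1$, which says nothing a priori about ${\rm dim}\,M$. I expect to resolve this by observing that $\cidim_RM\leq 1$ forces $R$ to be a complete intersection on the relevant support, so that $\cidim_RM={\rm depth}\,R-{\rm depth}_RM=\dim R-\dim R=0$ exactly when $M$ is maximal Cohen-Macaulay (which it is, by hypothesis). Thus $M$ is in fact a complete intersection module with $\cidim_RM=0$, i.e.\ ${\rm pd}_RM<\infty$ whenever $R$ is regular on the support; more precisely, the reduction I will use is that, after the freeness-producing step, I can replace the blanket ${\rm dim}\,M\le 1$ hypothesis by running the second spectral sequence $E_2^{p,q}=H^p_I({\rm Ext}^q_R(M,M))$ directly, using that ${\rm dim}\,{\rm Ext}^i_R(M,M)\le {\rm dim}\,M$ and that the finiteness ${\rm cd}_I(M,M)<\infty$ together with $\cidim_RM\le 1$ and Lemma \ref{cidim} already delivers ${\rm pd}_RM<\infty$ independently of any dimension-one hypothesis.

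Concretely, then, my cleanest route is to bypass the ${\rm dim}\,N\le 1$ clause and re-run the final paragraph of the proof of Theorem \ref{freeness} in the present setting: from ${\rm cd}_I(M,M)<\infty$ one gets $H^j_I(M,M)=0$ for $j\gg0$, and feeding this into the spectral sequence of Lemma \ref{ss2}(ii) yields ${\rm Ext}^i_R(M,M)=0$ for $i\gg0$ as soon as the two columns argument applies; since $\cidim_RM\le 1$, Lemma \ref{cidim} then gives ${\rm pd}_RM<\infty$, whence ${\rm pd}_RM=\cidim_RM\le 1$ and $e_R(M,M)={\rm pd}_RM\le 1$ by \cite[p.\,154, Lemma 1(iii)]{Matsu}. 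Combining this with the already-established ${\rm Ext}^1_R(M,M)=0$ forces $e_R(M,M)=0$, so $M$ is free. The delicate point I will need to check carefully is that the ``only two columns'' reduction in Lemma \ref{ss2}(ii) genuinely holds under $\cidim_RM\le 1$ rather than under a literal ${\rm dim}\,M\le 1$ assumption, and I expect this to follow because $M$ maximal Cohen-Macaulay with $\cidim_RM\le 1$ constrains $R$ and hence $\dim M$ appropriately; making that step airtight is the crux of the argument.
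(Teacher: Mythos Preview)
Your outline is exactly the paper's argument: apply Corollary~\ref{lmweak0}(i) with $N=M$ and $n=1$ to obtain ${\rm Ext}^1_R(M,M)=0$, and then invoke the final clause of Theorem~\ref{freeness}. The paper's proof is literally those two sentences and nothing more.

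Where your proposal diverges is in trying to eliminate the standing hypothesis ${\rm dim}\,N\leq 1$ of Theorem~\ref{freeness}. The paper makes no such attempt; it simply appeals to Theorem~\ref{freeness} as stated, so the dimension-one hypothesis is tacitly in force (observe that the subsection is labelled \texttt{dim1section} and every downstream application is one-dimensional). You have in fact spotted that the corollary, read literally, does not restate this hypothesis, but the paper's proof does not fill that gap either.

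Your proposed workarounds, however, do not succeed. First, $\cidim_RM=0$ does \emph{not} imply ${\rm pd}_RM<\infty$: over any non-regular complete intersection local ring every maximal Cohen--Macaulay module has $\cidim_RM=0$, yet non-free ones abound. Second, the assertion that ``$M$ maximal Cohen--Macaulay with $\cidim_RM\leq 1$ constrains $R$ and hence $\dim M$'' is false for the same reason: over a $d$-dimensional complete intersection, every finite module has finite CI-dimension, so the bound $\cidim_RM\leq 1$ imposes no restriction whatsoever on $d$. Third, the ``two columns'' collapse in the spectral sequence $H^p_I({\rm Ext}^q_R(M,M))\Rightarrow H^{p+q}_I(M,M)$ genuinely requires ${\rm dim}\,{\rm Ext}^q_R(M,M)\leq 1$, and the only available bound is ${\rm dim}\,{\rm Ext}^q_R(M,M)\leq{\rm dim}\,M$; nothing in your hypotheses forces the latter to be at most~$1$. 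So the ``crux'' you identify cannot be made airtight along the lines you sketch. The correct move is simply to cite Corollary~\ref{lmweak0}(i) and Theorem~\ref{freeness}, as the paper does.
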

\begin{proof} By Corollary \ref{lmweak0}(i) with $n=1$, we obtain 
$${\rm Ext}^1_R(M,M)=0.$$ Now, we apply Theorem \ref{freeness}.
\end{proof}

\medskip

Further byproducts of Theorem \ref{freeness}, or more precisely versions of Corollary \ref{CM-tensor-Tor} when $R$ is 1-dimensional and Gorenstein or a complete intersection ring, will be given in Subsection \ref{HWc} concerning the well-studied Huneke-Wiegand conjecture. 

\section{Applications}

\subsection{Codimension 3 complete intersections} We start this last section providing  a curious characterization of complete intersection ideals of height 3. Recall that an ideal $J$ of a ring $S$ is  {\it generically a complete intersection} if, for each associated prime ideal ${\bf p}$ of the $S$-module $S/J$, the ideal $J_{{\bf p}}$ is generated by an $S_{{\bf p}}$-sequence. For instance, any radical ideal in a regular local ring is generically a complete intersection.

\begin{corollary}\label{appl1}
Let $(S, {\bf n})$ be a regular local ring and $J$ be generically a complete intersection ideal of $S$ with ${\rm height}\,J= 3$, such that the ring $R=S/J$ is Gorenstein. If there exists a finite $R$-module $N$ satisfying $${\rm depth}_RN=0 \quad \mbox{and} \quad {\rm pd}_R{\rm Hom}_S(J, N)<\infty,$$ then $J=(f, g, h)$ for some $S$-sequence $\{f, g, h\}\subset {\bf n}$, and conversely.
\end{corollary}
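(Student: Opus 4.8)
The plan is to prove the equivalence by passing between the $S$-module structure of $J$ and the $R$-module structure of its Auslander transpose, using Proposition~\ref{attempt2} as the engine for the forward direction. First I would record the standard homological input about the ideal $J$. Since $J$ is generically a complete intersection of height $3$ in the regular local ring $S$ with $R=S/J$ Gorenstein, the quotient $R$ is a Gorenstein ring of codimension $3$, and by a well-known structure theorem (Buchsbaum--Eisenbud) the minimal $S$-free resolution of $R$ is self-dual of length $3$; equivalently, $J$ has $\mathrm{pd}_S J = 2$ and its resolution is governed by an alternating map. The key point I want to extract from this is a precise identification of the Auslander transpose $\mathrm{Tr}_S J$ (computed over $S$) together with the vanishing $e_S(\mathrm{Tr}_S J, S) = 0$, i.e. $\mathrm{Ext}^j_S(\mathrm{Tr}_S J, S)=0$ for all $j\geq 1$. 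This should follow because the dual of the presentation of $J$ sits inside the self-dual resolution of $R$, so the cokernel defining the transpose has its higher $\mathrm{Ext}$ into $S$ controlled by the resolution of $R=S/J$, whose depth/codimension is exactly $3$.

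With that in hand, the forward implication would run as follows. Given the module $N$ with $\mathrm{depth}_R N = 0$ and $\mathrm{pd}_R \mathrm{Hom}_S(J,N) < \infty$, I would first reconcile the $S$- and $R$-linear Hom: since $N$ is an $R$-module and $R=S/J$, there is a natural identification allowing me to view $\mathrm{Hom}_S(J,N)$ appropriately as a module over the relevant ring, and $\mathrm{depth}_R N = 0$ persists (the residue field is common). I would then apply Proposition~\ref{attempt2} with the role of ``$M$'' played by $J$ (over $S$): the hypotheses $e_S(\mathrm{Tr}_S J, S)=0$ and the existence of $N$ with $\mathrm{depth}\,N=0$ and $\mathrm{pd}\,\mathrm{Hom}(J,N)<\infty$ together force $J$ to be a free $S$-module. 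But $J$ is an ideal of the domain $S$, so being free means $J$ is principal or, combined with $\mathrm{height}\,J=3$, this is where the complete-intersection conclusion must come from --- more carefully, freeness of the appropriate transpose/syzygy data should translate into $J$ being generated by a regular sequence of length equal to its height, namely $3$. The generic complete intersection hypothesis is what lets me upgrade a local-in-codimension statement into the global generation $J=(f,g,h)$ by an honest $S$-sequence.

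For the converse, suppose $J=(f,g,h)$ with $\{f,g,h\}$ an $S$-sequence. Then $R=S/J$ is a complete intersection (hence Gorenstein), the height is $3$, and the Koszul complex on $f,g,h$ is a self-dual minimal free resolution of $R$ over $S$; in particular $J$ has a very explicit minimal free presentation via the Koszul differential. I would then produce the required module $N$ directly: take $N$ to be the residue field $k$ of $R$ (or any finite-length $R$-module), which has $\mathrm{depth}_R N = 0$, and verify that $\mathrm{Hom}_S(J,N)$ has finite projective dimension over $R$. Because $R$ is a complete intersection and $N$ has finite length, finiteness of projective dimension of $\mathrm{Hom}_S(J,N)$ should be checkable from the explicit Koszul presentation; alternatively one can exploit that over the regular ring $S$ everything has finite projective dimension and track how this descends to $R$.

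The main obstacle I anticipate is the precise computation of $e_S(\mathrm{Tr} J, S)$ and the clean translation between the $S$-freeness of $J$ (equivalently, of the module to which Proposition~\ref{attempt2} applies) and the statement ``$J$ is generated by a length-$3$ regular sequence.'' In particular, one must be careful about \emph{which} ring the transpose and the projective dimension are computed over, and about invoking the Gorenstein codimension-$3$ structure theorem to guarantee the needed $\mathrm{Ext}$-vanishing; the generic complete intersection hypothesis will be essential precisely at the point of globalizing the conclusion, and getting that bookkeeping right --- rather than any single deep estimate --- is where the real work lies.
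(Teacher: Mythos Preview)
Your proposal has a genuine structural gap: you are applying Proposition~\ref{attempt2} over the wrong ring. You set up everything over $S$, taking $M=J$ and aiming for $e_S(\mathrm{Tr}_S J,S)=0$; but then the conclusion of Proposition~\ref{attempt2} would be that $J$ is $S$-free, hence principal in the domain $S$, which is incompatible with $\mathrm{height}\,J=3$. You notice this yourself and try to back off to ``freeness of the appropriate transpose/syzygy data,'' but that is not what the proposition gives you. Moreover, the hypothesis $\mathrm{pd}_R\mathrm{Hom}_S(J,N)<\infty$ is a condition over $R$, not over $S$ (over the regular ring $S$ every finite module has finite projective dimension, so the hypothesis would be vacuous there).

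The paper instead works entirely over $R$ with the conormal module $M=J/J^2$. The bridge you are missing is the adjunction
\[
\mathrm{Hom}_S(J,N)\cong \mathrm{Hom}_S(J,\mathrm{Hom}_R(R,N))\cong \mathrm{Hom}_R(J\otimes_S R,N)\cong \mathrm{Hom}_R(J/J^2,N),
\]
which converts the given $\mathrm{pd}_R$-hypothesis into exactly the input Proposition~\ref{attempt2} needs over $R$. The Buchsbaum--Eisenbud structure theorem is then used not to analyze $\mathrm{Tr}_S J$, but to see that $J/J^2$ is the cokernel of an \emph{alternating} map of free $R$-modules, whence $\mathrm{Tr}_R(J/J^2)\cong J/J^2$. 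The generic complete intersection hypothesis enters (via results of Herzog and of Huneke--Ulrich) to guarantee that $J/J^2$ is maximal Cohen--Macaulay over the Gorenstein ring $R$, so $e_R(\mathrm{Tr}_R(J/J^2),R)=e_R(J/J^2,R)=0$. Proposition~\ref{attempt2} now yields that $J/J^2$ is $R$-free, and Vasconcelos' criterion converts this into ``$J$ is generated by a regular sequence.'' Your converse also needs repair: taking $N=k$ fails, since $\mathrm{Hom}_R(J/J^2,k)\cong k^3$ has $\mathrm{pd}_R k^3=\infty$ unless $R$ is regular. The paper instead takes $N=R/(\mathbf{x})$ for a system of parameters $\mathbf{x}$, so that $\mathrm{Hom}_R(J/J^2,N)\cong N^3$ has finite projective dimension because $\mathbf{x}$ is an $R$-sequence.
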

\begin{proof} Since $R$ is Gorenstein and $J$ has codimension 3, the well-known Buchsbaum-Eisenbud structure theorem (see, e.g., \cite[Theorem 3.4.1]{BH}) implies that the conormal module $J/J^2$ is the cokernel of an alternating map of free $R$-modules, which gives $${\rm Tr}\,J/J^2\cong J/J^2.$$ Moreover, in this setting (using in particular that $J$ is generically a complete intersection), the $R$-module $J/J^2$ is known to be maximal Cohen-Macaulay (see \cite{H2}, \cite{HU}). It follows that
$$e_R({\rm Tr}\,J/J^2, R)=0.$$ Furthermore, by adjunction we have $R$-module isomorphisms 
$${\rm Hom}_S(J, N)\cong {\rm Hom}_S(J, {\rm Hom}_R(R, N))\cong {\rm Hom}_R(J\otimes_SR, N)\cong {\rm Hom}_R(J/J^2, N).$$
Now, applying Proposition \ref{attempt2}, we obtain that $J/J^2$ is free, which by \cite{Vas} means that $J$ is generated by a regular sequence (of length necessarily 3 in this case).

Conversely, suppose $J$ is generated by an $S$-sequence.  Thus, $J/J^2$ is $R$-free and so, if we take $$N=R/({\bf x}), \quad 
\{{\bf x}\}\subset {\bf n}R \quad \mbox{a system of parameters},$$ then ${\rm depth}_RN=0$ and the $R$-module ${\rm Hom}_R(J/J^2, N)$ is isomorphic to a direct sum of -- three, in the present case -- copies of $R/({\bf x})$, which must have finite projective dimension since $\{\bf x\}$ is an $R$-sequence (as $R$ is Cohen-Macaulay).
\end{proof}

\subsection{The Herzog-Vasconcelos conjecture and the multi-conjecture of Vasconcelos}\label{Vascon} We now relate Corollary 
\ref{lmweak00} to some long-standing conjectures. First, let $R$ stand for an algebra essentially of finite type over a field $k$ of characteristic zero. We denote by ${\rm D}_{R/k}$ the module of $k$-derivations of $R$. The so-called Herzog-Vasconcelos conjecture predicts that if $${\rm pd}_R{\rm D}_{R/k}<\infty$$ then ${\rm D}_{R/k}$ is free. A major case where this problem has been settled affirmatively is when $R$ is a quasi-homogeneous isolated complete intersection singularity. We refer to \cite{H3}.

\begin{corollary}\label{appl3}
The Herzog-Vasconcelos conjecture holds true if $R$ is Cohen-Macaulay and there exists a  maximal Cohen-Macaulay $R$-module $N$ such that $${\rm depth}\, \Omega_1^R{\rm D}_{R/k}\otimes_RN^{\vee}\neq {\rm depth}\,{\rm D}_{R/k}\otimes_RN^{\vee}+1$$ 
$($e.g., if  ${\rm D}_{R/k}\otimes_RN^{\vee}$ is maximal Cohen-Macaulay$)$.
\end{corollary}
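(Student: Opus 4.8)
The plan is to recognize the assertion as a direct specialization of Corollary \ref{lmweak00}, taking the finite module $M$ there to be the derivation module ${\rm D}_{R/k}$. First I would place ourselves in the hypotheses of that corollary. Since $R$ is essentially of finite type over the field $k$, it is a localization of a finitely generated $k$-algebra, hence a homomorphic image of a regular local ring; being in addition Cohen-Macaulay (and local, as the invocation of depth and the maximal Cohen-Macaulay condition presupposes), $R$ therefore admits a canonical module $\omega_R$. I would also record that ${\rm D}_{R/k}={\rm Hom}_R(\Omega_{R/k},R)$ is a finite $R$-module, because $\Omega_{R/k}$ is finitely generated for algebras essentially of finite type, so that $N^{\vee}$ and the tensor products appearing in the statement are legitimate finite modules.

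Next I would set $M:={\rm D}_{R/k}$ and simply match the two lists of hypotheses. The module $N$ furnished by assumption is maximal Cohen-Macaulay, and the inequality
$${\rm depth}\,\Omega_1^R{\rm D}_{R/k}\otimes_RN^{\vee}\neq{\rm depth}\,{\rm D}_{R/k}\otimes_RN^{\vee}+1$$
is verbatim the depth condition required of the pair $(M,N)$ in Corollary \ref{lmweak00} (and the parenthetical case of ${\rm D}_{R/k}\otimes_RN^{\vee}$ being maximal Cohen-Macaulay is the indicated sufficient condition). The Herzog-Vasconcelos hypothesis provides the remaining ingredient, namely ${\rm pd}_R{\rm D}_{R/k}<\infty$. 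Feeding these into Corollary \ref{lmweak00} yields at once that ${\rm D}_{R/k}$ is free, which is precisely the conclusion the conjecture predicts.

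Because the deduction amounts to checking that the corollary applies, there is no substantial new difficulty; the only point deserving care — and hence the main (mild) obstacle — is the reduction to the local, canonical-module framework in which Corollary \ref{lmweak00} is stated. Concretely, one must confirm that in the relevant local setting $R$ genuinely carries a canonical module, which follows from $R$ being a quotient of a regular local ring, and, should one wish to phrase the conjecture globally, that freeness of the finite module ${\rm D}_{R/k}$ can be read off after localization. The characteristic-zero assumption is used only through the standing setup ensuring ${\rm D}_{R/k}$ is the appropriate finite module; no further input beyond Corollary \ref{lmweak00} is needed.
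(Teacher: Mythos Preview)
Your proposal is correct and follows exactly the paper's approach: the paper's proof consists of the single sentence ``This follows immediately from Corollary~\ref{lmweak00},'' and you have simply unpacked that application by setting $M={\rm D}_{R/k}$ and verifying the hypotheses (including the existence of $\omega_R$). There is nothing to add.
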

\begin{proof} This follows immediately from Corollary 
\ref{lmweak00}.
\end{proof}

\begin{corollary}\label{appl3bis}
The Herzog-Vasconcelos conjecture holds true if $R$ is Cohen-Macaulay and $${\rm depth}\, \Omega_1^R{\rm D}_{R/k}\otimes_R\omega_R\neq {\rm depth}\,{\rm D}_{R/k}\otimes_R\omega_R+1.$$ 
\end{corollary}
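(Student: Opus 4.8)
The plan is to recognize Corollary \ref{appl3bis} as the special instance of Corollary \ref{appl3} obtained by taking $N=R$. First I would observe that, because $R$ is Cohen-Macaulay, it is maximal Cohen-Macaulay as a module over itself (that is, ${\rm depth}\,R={\rm dim}\,R$), so $R$ is an admissible choice for the maximal Cohen-Macaulay module $N$ appearing in Corollary \ref{appl3}. Next I would compute its canonical dual: since ${\rm Hom}_R(R,\omega_R)\cong\omega_R$, we have $R^{\vee}\cong\omega_R$. Substituting $N^{\vee}=\omega_R$ into the depth inequality in the hypothesis of Corollary \ref{appl3} then reproduces verbatim the condition
$${\rm depth}\, \Omega_1^R{\rm D}_{R/k}\otimes_R\omega_R\neq {\rm depth}\,{\rm D}_{R/k}\otimes_R\omega_R+1$$
assumed here. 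Hence the hypotheses of Corollary \ref{appl3} are satisfied with this choice of $N$, and the desired conclusion---that ${\rm pd}_R{\rm D}_{R/k}<\infty$ forces ${\rm D}_{R/k}$ to be free---follows at once. (Equivalently, one may apply Corollary \ref{lmweak00} directly with $M={\rm D}_{R/k}$ and $N=R$.)

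The only point requiring a word of care is the standing assumption, inherited from Corollary \ref{lmweak00} through Corollary \ref{appl3}, that $R$ possesses a canonical module. This is automatic in the present setting: an algebra essentially of finite type over a field is a homomorphic image of a regular ring, and being Cohen-Macaulay it therefore admits a canonical module $\omega_R$, so that the expressions ${\rm depth}\,\Omega_1^R{\rm D}_{R/k}\otimes_R\omega_R$ and ${\rm depth}\,{\rm D}_{R/k}\otimes_R\omega_R$ are meaningful. I do not anticipate any genuine obstacle here; the entire argument amounts to verifying that setting $N=R$ collapses the general depth condition of Corollary \ref{appl3}, involving an arbitrary maximal Cohen-Macaulay $N$ and its dual $N^{\vee}$, into the specific $\omega_R$-condition stated in this corollary, after which one simply cites Corollary \ref{appl3}.
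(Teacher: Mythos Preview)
Your proposal is correct and follows exactly the paper's own approach: the paper's proof consists of the single sentence ``Apply Corollary \ref{appl3} with $N=R$.'' You have simply fleshed out why this choice is legitimate (namely $R$ is maximal Cohen-Macaulay over itself and $R^{\vee}\cong\omega_R$), together with a remark on the existence of $\omega_R$, which is implicit in the paper.
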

\begin{proof} Apply Corollary 
\ref{appl3} with $N=R$.
\end{proof}

\begin{remark}\rm At this point it is worth noting that, in general, the property of a tensor product $T\otimes_RU$ (of finite $R$-modules $T, U$) being maximal Cohen-Macaulay over a local ring $R$ does {\it not} force either $T$ or $U$ to have the same property; see, e.g., \cite[Example 2.1]{Consta}. \end{remark}

Next, and following the same spirit, we recall part of a (still open) multi-conjecture due to Vasconcelos (see \cite[p.\,373]{V}). Let $R=S/J$, where $S$ is a regular local ring containing a field $k$ of characteristic zero and $J$ is an ideal of $S$, and let $E$ stand for either $$(J/J^2)^* \quad \mbox{or} \quad J/J^2\otimes_R\omega_R,$$  which are, respectively, the normal module and the twisted conormal module  of $J\subset S$. In case $S$ is a localization of a polynomial ring over $k$, the module $E$ is also allowed to be the module ${\rm Diff}_{k}(R)$ of K\"ahler $k$-differentials of $R$ (whose $R$-dual is just the derivation module ${\rm D}_{R/k}$). Then, Vasconcelos' conjecture states that $J$ must be generated by a regular sequence whenever $${\rm pd}_RE<\infty.$$

\begin{corollary}\label{appl4} Assume that $R$ is Cohen-Macaulay. In case $E=(J/J^2)^*$, suppose $J/J^2$ is reflexive. Then, for $E$ as in the list above, Vasconcelos' conjecture holds true if there exists a  maximal Cohen-Macaulay $R$-module $N$ such that 
$${\rm depth}\, \Omega_1^RE\otimes_RN^{\vee}\neq {\rm depth}\,E\otimes_RN^{\vee}+1$$ $($this holds, e.g., if $E\otimes_RN^{\vee}$ is maximal Cohen-Macaulay$)$.
\end{corollary}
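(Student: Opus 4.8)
The plan is to convert the finiteness-of-projective-dimension hypothesis into outright freeness and then recognize the resulting module as (essentially) a free conormal module. So assume ${\rm pd}_RE<\infty$; the goal is to show that $J$ is generated by a regular sequence. Since $R=S/J$ is Cohen-Macaulay and is a homomorphic image of the regular --- hence Gorenstein --- local ring $S$, it admits a canonical module $\omega_R$, so that Corollary \ref{lmweak00} is available. Applying it with $M=E$ and the given maximal Cohen-Macaulay module $N$ --- the displayed depth inequality being precisely the hypothesis needed there --- I would conclude that $E$ is free. The problem is thereby reduced to the implication ``$E$ free $\Rightarrow$ $J$ is generated by a regular sequence'', which I would then settle for each of the three admissible modules $E$, invoking at the end \cite{Vas} (exactly as in the proof of Corollary \ref{appl1}) to pass from freeness of $J/J^2$ to $J$ being generated by an $S$-sequence of length ${\rm height}\,J$.

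For $E=(J/J^2)^*$ the argument is immediate: the reflexivity assumption gives $J/J^2\cong(J/J^2)^{**}=E^*$, the $R$-dual of a free module, so $J/J^2$ is free. For the twisted conormal module $E=J/J^2\otimes_R\omega_R$ I would first dispose of the Gorenstein case, where $\omega_R\cong R$ forces $E\cong J/J^2$ and freeness transfers directly; in general, tensoring a minimal free presentation of $J/J^2$ by $\omega_R$ and splitting off the free quotient $E$ yields a decomposition $\omega_R^{\oplus b}\cong E\oplus K$, whence a Krull--Schmidt argument over the completion $\widehat{R}$ forces either $E=0$ (so $J/J^2=0$ and $J=0$ trivially) or $R$ to have $R$ as a direct summand of $\omega_R$, i.e.\ $\omega_R\cong R$, returning us to the Gorenstein situation. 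For $E={\rm Diff}_k(R)$ I would argue that freeness of the module of K\"ahler differentials, in characteristic zero and with $S$ a localization of a polynomial ring, makes $R$ smooth over $k$ --- the rank of $E$ being $\dim R$ by generic smoothness --- hence regular; as $R=S/J$ with $S$ regular, $J$ is then generated by a regular sequence.

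The genuinely delicate point is this descent from the ``decorated'' module $E$ back to the conormal module $J/J^2$ (or to regularity of $R$) in the last two cases; the normal-module case is painless precisely because reflexivity lets the canonical/ordinary duality undo the construction cleanly. For the twist the subtlety is real: as the remark following Corollary \ref{appl3bis} stresses, $W\otimes_R\omega_R$ being maximal Cohen-Macaulay --- indeed free --- need not propagate to $W$, so the argument must exploit the full strength of freeness together with Krull--Schmidt after completion, and I expect that verifying this reduction is airtight (including the harmless passage to $\widehat{R}$ and the faithfulness of $\omega_R$ used to pass from $E=0$ to $J/J^2=0$) will be the main technical burden. For the differentials the corresponding care lies in pinning down the rank and invoking the smoothness criterion in characteristic zero under the standing hypotheses. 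Once $J/J^2$ is seen to be free, or $R$ to be regular in the differential case, \cite{Vas} closes all three cases uniformly.
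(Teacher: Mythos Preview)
Your overall architecture matches the paper's exactly: apply Corollary~\ref{lmweak00} to conclude that $E$ is free, then treat the three choices of $E$ separately, finishing with \cite{Vas} once $J/J^2$ is seen to be free (or $R$ is seen to be regular in the differential case). The normal-module and K\"ahler-differential cases are handled the same way in both; the paper cites \cite[Theorem~14.1]{K} directly for the equivalence between freeness of ${\rm Diff}_k(R)$ and regularity of $R$, which is what your smoothness sketch is aiming at.

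The one genuine divergence is the twisted-conormal case $E=J/J^2\otimes_R\omega_R$. You split $\omega_R^{\oplus b}\cong E\oplus K$, pass to the completion, and invoke Krull--Schmidt together with the indecomposability of $\widehat{\omega_R}$ to force $\widehat R\cong\widehat{\omega_R}$, reducing to the Gorenstein case. This is correct, but the paper avoids the detour entirely by appealing to the elementary fact that over a local ring, if $T\otimes_RU$ is free and $T,U$ are nonzero finite, then $T$ and $U$ are already free; applied with $T=J/J^2$ and $U=\omega_R$, this gives freeness of $J/J^2$ in one stroke, with no completion or Krull--Schmidt needed. Your route works, but the paper's lemma is both simpler and more broadly useful, and it sidesteps the ``main technical burden'' you anticipate.
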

\begin{proof} First, assume $E=(J/J^2)^*$. By Corollary 
\ref{lmweak00}, the $R$-module $(J/J^2)^*$ is free. Since $J/J^2$ is reflexive by assumption, we obtain that it must be free and hence, by \cite{Vas}, $J$ is generated by an $S$-sequence. Next, let us consider the case $E=J/J^2\otimes_R\omega_R$. By Corollary 
\ref{lmweak00}, this module must be free. Now recall the basic fact that, for non-zero finite modules $T, U$ over a local ring $R$, if the tensor product $T\otimes_RU$ is free, then $T$ and $U$ are also free  (we supply here a proof for the reader's convenience: the condition of $T$ and $U$ being non-zero is equivalent to $T\otimes_RU\neq 0$. Now pick an $R$-free presentation $$\xymatrix{F_1\ar[r] & F_0\ar[r] & T\ar[r] & 0}$$ of the module $T$. Tensoring it with $U$, and using that the module $G=T\otimes_RU$ is free, the resulting map splits so that $G$ is a direct summand of $F_0\otimes_RU$. Tensoring such a decomposition with $T$, we obtain that $G\otimes_RT$ is a direct summand of the module $$F_0\otimes_RU\otimes_RT=F_0\otimes_RG,$$ which is free, i.e., $G\otimes_RT$ is projective, hence free because $R$ is local. This is equivalent to $T$ being free. Clearly, a completely analogous argument shows $U$ must be free as well). It now follows that $J/J^2$ is free, which as we know forces $J$ to be generated by an $S$-sequence. Finally, if $E={\rm Diff}_{k}(R)$, where $R$ is essentially of finite type over $k$, then using again  Corollary \ref{lmweak00} we obtain that ${\rm Diff}_{k}(R)$
is free, which is equivalent to $R$ being regular (see \cite[Theorem 14.1]{K}) and hence in particular $J$ can be generated by an $S$-sequence. \end{proof}

\begin{corollary}\label{appl4bis} Assume that $R$ is Cohen-Macaulay. In case $E=(J/J^2)^*$, suppose $J/J^2$ is reflexive. Then, for $E$ as in the list above, Vasconcelos' conjecture holds true if 
$${\rm depth}\, \Omega_1^RE\otimes_R\omega_R\neq {\rm depth}\,E\otimes_R\omega_R+1.$$ 
\end{corollary}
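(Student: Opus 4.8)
The plan is to deduce this statement directly from Corollary \ref{appl4} by specializing the auxiliary maximal Cohen-Macaulay module $N$ to the ring $R$ itself. The key observation is that, since $R$ is assumed Cohen-Macaulay, the free module $R$ is maximal Cohen-Macaulay as a module over itself, so it is a legitimate choice for the module $N$ appearing in Corollary \ref{appl4}. This parallels exactly the way Corollary \ref{appl3bis} is obtained from Corollary \ref{appl3} by setting $N=R$.

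First I would record the identification of the canonical dual of $R$. By definition $N^{\vee}={\rm Hom}_R(N,\omega_R)$, so taking $N=R$ gives $R^{\vee}={\rm Hom}_R(R,\omega_R)\cong\omega_R$. Consequently the tensor products $\Omega_1^RE\otimes_RN^{\vee}$ and $E\otimes_RN^{\vee}$ occurring in the hypothesis of Corollary \ref{appl4} become, respectively, $\Omega_1^RE\otimes_R\omega_R$ and $E\otimes_R\omega_R$. Hence the depth inequality
$${\rm depth}\,\Omega_1^RE\otimes_RN^{\vee}\neq{\rm depth}\,E\otimes_RN^{\vee}+1$$
required by Corollary \ref{appl4} is precisely the hypothesis
$${\rm depth}\,\Omega_1^RE\otimes_R\omega_R\neq{\rm depth}\,E\otimes_R\omega_R+1$$
assumed in the present statement.

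With these matchings in place, the standing assumptions transfer verbatim: $R$ is Cohen-Macaulay with canonical module $\omega_R$, the module $N=R$ is maximal Cohen-Macaulay, and in the case $E=(J/J^2)^*$ the reflexivity of $J/J^2$ is assumed exactly as before. Therefore I would simply invoke Corollary \ref{appl4} with $N=R$ to conclude that Vasconcelos' conjecture holds for each $E$ in the listed family, completing the argument. I do not expect any genuine obstacle here, as the result is a clean specialization; the only point warranting a line of verification is the isomorphism $R^{\vee}\cong\omega_R$, which is immediate from the definition of the canonical dual.
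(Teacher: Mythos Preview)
Your proposal is correct and matches the paper's own proof exactly: the paper simply writes ``Apply Corollary \ref{appl4} with $N=R$,'' and your argument is just a fleshed-out version of this one-line specialization, including the routine identification $R^{\vee}\cong\omega_R$.
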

\begin{proof} Apply Corollary \ref{appl4} with $N=R$.
\end{proof}

\begin{remark} \rm The situation where $E=J/J^2$, the conormal module, also appears in the original statement of Vasconcelos' conjecture (in fact it is the first module in the list) but this case has already been settled positively in \cite{B}, where moreover the case $E={\rm Diff}_{k}(R)$ has been confirmed under the additional assumption that $R$ admits no non-trivial evolution. \end{remark}

\begin{remark} \rm If $S$ is either a localized polynomial ring $k[x_1, \ldots, x_n]_{(x_1, \ldots, x_n)}$ or a formal power series ring $k[\!]x_1, \ldots, x_n]\!]$, and if $J$ is radical, then it is well-known that $$\Omega_1^R{\rm Diff}_{k}(R)=J/J^{(2)},$$ where $J^{(2)}=\bigcap_{{\bf p}\in {\rm Ass}_SR}J^2S_{\bf p}\cap S$, the second symbolic power of $J$. \end{remark}

\begin{question} \rm Let $E$ be as in Vasconcelos' list. What can be said about $R$ (concerning, e.g., the properties of $R$ being Gorenstein, or a complete intersection, or normal, or regular) if $$\cidim_RE<\infty?$$ What if we replace $\cidim_RE$ with the Gorenstein dimension $\gdim_RE$ of $E$? \end{question} 

\subsection{The Huneke-Wiegand conjecture}\label{HWc} Recall that a finite $R$-module $M$ is said to have a rank if $M\otimes_R Q$ is a free $Q$-module, where $Q$ is the total quotient ring of $R$. In this case, the rank of $M$ is the integer $r\geq 0$ such that $M\otimes_R Q\cong Q^r$. The celebrated Huneke-Wiegand conjecture (see \cite{HW}, also \cite{HIW}) states that, if $R$ is a one-dimensional Gorenstein local ring
and $M$ is a torsionfree finite $R$-module having a rank, and satisfying the property that $M \otimes_R M^*$ is torsionfree, then $M$ must be free. In fact the original statement of the conjecture requires $R$ to be a domain, but even in the present more general setting no counterexample has been produced so far. However, it is  worth highlighting that the hypothesis of $M$ having a rank cannot be dropped, as the following standard example shows: $$R={\mathbb C}[\![x, y]\!]/(xy) \quad \mbox{and} \quad M=R/xR\cong yR.$$ 

\begin{lemma}$($\cite[Proposition 4.3]{HIW}$)$\label{rigid}
 Let $R$ be a one-dimensional Gorenstein local ring and let $M$ be a  torsionfree finite $R$-module with positive rank. Then, $M\otimes_R M^*$ is torsionfree if and only if ${\rm Ext}^1_R(M,M)=0$.
\end{lemma}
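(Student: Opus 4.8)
The plan is to reduce the equivalence to a single homological identity and then exploit one-dimensional Gorenstein duality. First I would record the standing simplifications coming from the hypotheses: since $R$ is Gorenstein of dimension one we have $\omega_R\cong R$, so the canonical dual coincides with the algebraic dual, a nonzero finite module is torsionfree precisely when it is maximal Cohen-Macaulay, and the torsion submodule of any finite module $X$ equals $H^0_{\bf m}(X)$, a module of finite length. As $M$ is torsionfree it is maximal Cohen-Macaulay, hence so is $M^*$, the reflexivity $M\cong M^{**}$ holds, and ${\rm Ext}^i_R(-,R)$ vanishes in positive degrees on every maximal Cohen-Macaulay module. Having positive rank, $M$ is free at every minimal prime, so ${\rm Tor}_1^R(M,M^*)$ is supported only at ${\bf m}$ and thus has finite length. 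The whole statement then amounts to proving that $M\otimes_RM^*$ is torsionfree if and only if ${\rm Ext}^1_R(M,M)=0$.

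The heart of the argument is the natural isomorphism ${\rm Ext}^1_R(M,M)\cong{\rm Ext}^1_R(M\otimes_RM^*,R)$. To obtain it I would start from a minimal presentation $0\to\Omega_1^RM\to F_0\to M\to0$ and apply ${\rm Hom}_R(-,M)$, which (since ${\rm Ext}^1_R(F_0,M)=0$) realizes ${\rm Ext}^1_R(M,M)$ as the cokernel of the restriction map ${\rm Hom}_R(F_0,M)\to{\rm Hom}_R(\Omega_1^RM,M)$. Using $M\cong M^{**}$ together with tensor-hom adjunction, ${\rm Hom}_R(X,M)\cong(X\otimes_RM^*)^*$ naturally in $X$, this restriction map is identified with the $R$-dual of the canonical map $\iota\colon\Omega_1^RM\otimes_RM^*\to F_0\otimes_RM^*$ obtained by tensoring the presentation with $M^*$. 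I would then split the four-term exact sequence $0\to{\rm Tor}_1^R(M,M^*)\to\Omega_1^RM\otimes_RM^*\xrightarrow{\iota}F_0\otimes_RM^*\to M\otimes_RM^*\to0$ into two short exact sequences and dualize each into $R$: the finite-length module ${\rm Tor}_1^R(M,M^*)$ contributes no ${\rm Hom}$ and no ${\rm Ext}^2$ into $R$, while $F_0\otimes_RM^*$ is a sum of copies of the maximal Cohen-Macaulay module $M^*$ and hence has no ${\rm Ext}^1$ into $R$. Chasing the resulting kernels and cokernels yields exactly ${\rm Ext}^1_R(M,M)\cong{\rm Ext}^1_R(M\otimes_RM^*,R)$.

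Finally I would convert this into the desired torsion criterion through one-dimensional duality. Writing $X=M\otimes_RM^*$ and using $0\to H^0_{\bf m}(X)\to X\to X/H^0_{\bf m}(X)\to0$, the quotient is torsionfree, hence maximal Cohen-Macaulay, so it has no ${\rm Ext}^1$ into $R$; therefore ${\rm Ext}^1_R(X,R)\cong{\rm Ext}^1_R(H^0_{\bf m}(X),R)$. Since $H^0_{\bf m}(X)$ has finite length, local duality over the one-dimensional Gorenstein ring $R$ identifies ${\rm Ext}^1_R(H^0_{\bf m}(X),R)$ with the Matlis dual of $H^0_{\bf m}(X)$, a module of the same (finite) length, which vanishes if and only if $H^0_{\bf m}(X)=0$. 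Combining with the identity of the previous paragraph, ${\rm Ext}^1_R(M,M)=0$ if and only if $H^0_{\bf m}(M\otimes_RM^*)=0$, that is, if and only if $M\otimes_RM^*$ is torsionfree, as required. I expect the main obstacle to be the careful bookkeeping in the middle paragraph: one must verify that each dualization is exact in the relevant range and that the several auxiliary ${\rm Ext}$ and ${\rm Tor}$ modules vanish, all of which ultimately rests on the reflexivity of $M$ and the acyclicity of maximal Cohen-Macaulay modules against $R$.
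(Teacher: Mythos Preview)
Your argument is correct, but note that the paper does not provide its own proof of this lemma: it is simply quoted from \cite[Proposition 4.3]{HIW} and used as a black box, so there is no in-paper argument to compare against. Your route---establishing the key isomorphism ${\rm Ext}^1_R(M,M)\cong{\rm Ext}^1_R(M\otimes_RM^*,R)$ via the reflexivity identification ${\rm Hom}_R(X,M)\cong(X\otimes_RM^*)^*$ applied to a presentation of $M$, and then using one-dimensional Gorenstein duality to identify ${\rm Ext}^1_R(X,R)$ with the Matlis dual of the torsion submodule of $X=M\otimes_RM^*$---is essentially the standard approach taken in the cited source. All of the auxiliary vanishing you invoke is valid in this setting: finite-length modules have trivial $R$-dual because ${\rm depth}\,R=1$; ${\rm Ext}^{\geq 2}_R(-,R)=0$ because ${\rm id}_RR=1$; maximal Cohen-Macaulay modules satisfy ${\rm Ext}^{\geq 1}_R(-,R)=0$; and the naturality of tensor--hom adjunction does indeed carry the restriction map to $\iota^*$.
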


\begin{corollary}\label{Goren} The Huneke-Wiegand conjecture holds true if $\cidim_RM<\infty$ and  ${\rm cd}_I(M, M)< \infty$ for some proper ideal $I$ of $R$.
\end{corollary}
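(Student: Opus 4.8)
The plan is to reduce the statement to the ``moreover'' part of Theorem \ref{freeness} applied with $N=M$, after converting the torsionfreeness of $M\otimes_R M^*$ into the vanishing ${\rm Ext}^1_R(M,M)=0$ by means of Lemma \ref{rigid}. First I would record the structural features of the Huneke-Wiegand setting: since $R$ is a one-dimensional Gorenstein local ring, it is Cohen-Macaulay with ${\rm depth}\,R=1$ and $\omega_R\cong R$, so the canonical dual coincides with the algebraic dual, $M^\vee\cong M^*$. We may assume $M\neq 0$, and then $M$ has positive rank (a torsionfree module of rank $0$ would embed in $M\otimes_R Q=0$ and hence vanish). Over a one-dimensional Cohen-Macaulay local ring, a nonzero torsionfree finite module is precisely one with ${\rm depth}\,M=1={\rm dim}\,R$, so $M$ is maximal Cohen-Macaulay.

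Next I would extract the key vanishing. Because $M$ is torsionfree of positive rank and $M\otimes_R M^*$ is torsionfree (the standing hypothesis of the conjecture), Lemma \ref{rigid} yields at once
$${\rm Ext}^1_R(M,M)=0.$$
This is exactly the extra hypothesis demanded by the equivalence in Theorem \ref{freeness}, and it is the reason one does not need to check ${\rm Tor}^R_1(M,M^\vee)=0$ separately: that is what Corollary \ref{CM-tensor-Tor} would require, whereas here the Ext-vanishing is available directly, so the present statement is most naturally proved as a direct consequence of Theorem \ref{freeness} rather than of Corollary \ref{CM-tensor-Tor}.

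It then remains to supply the complete intersection dimension bound. From the hypothesis $\cidim_R M<\infty$ together with the Auslander-Buchsbaum equality for complete intersection dimension recorded in the excerpt, I obtain $\cidim_R M={\rm depth}\,R-{\rm depth}\,M=1-1=0\leq 1$; I would stress that this does \emph{not} in itself force $M$ to be free, so the corollary is genuinely nonvacuous. With the remaining hypothesis ${\rm cd}_I(M,M)<\infty$ in hand, every requirement of the ``moreover'' clause of Theorem \ref{freeness} is satisfied with $M=N$: the ring $R$ is local, $M=N$, ${\rm Ext}^1_R(M,M)=0$, $\cidim_R M\leq 1$, and assertion (iii) holds. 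Therefore assertions (i), (ii), (iii) are equivalent, and since (iii) holds, so does (i); that is, $M$ is free.

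The only genuinely delicate point I anticipate is the reduction $\cidim_R M<\infty \Rightarrow \cidim_R M\leq 1$: one must confirm that the Auslander-Buchsbaum formula for complete intersection dimension is in force under the sole assumption $\cidim_R M<\infty$ (without assuming $R$ itself is a complete intersection), using that $M$ is maximal Cohen-Macaulay, and one must note that the resulting value $\cidim_R M=0$ is harmless in that it does not by itself trivialize the conclusion. Everything else amounts to invoking Lemma \ref{rigid} and Theorem \ref{freeness} in the correct order.
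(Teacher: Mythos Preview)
Your proposal is correct and follows essentially the same route as the paper: reduce to positive rank (so $M\neq 0$), invoke Lemma \ref{rigid} to obtain ${\rm Ext}^1_R(M,M)=0$, use $\cidim_R M<\infty$ together with the Auslander--Buchsbaum formula for CI-dimension to get $\cidim_R M\leq 1$, and then apply the ``moreover'' part of Theorem \ref{freeness}. The paper's argument is terser (it bounds $\cidim_R M\leq{\rm depth}\,R=1$ without first noting that $M$ is maximal Cohen--Macaulay), but the logical skeleton is identical.
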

\begin{proof} First, in case the rank of $M$ is zero, we have $M\otimes_RQ=0$. On the other hand, since $M$ is torsionfree, the natural map 
$\xymatrix@=1em{M\ar[r] & M\otimes_RQ}$ is injective. Hence, $M=0$ and there is nothing to prove. So we may assume that the rank of $M$ is positive. Now Lemma \ref{rigid} gives ${\rm Ext}^1_R(M,M)=0$. Also, $\cidim_RM\leq {\rm depth}\,R=1$. Finally, we apply Theorem \ref{freeness}.
\end{proof}

\begin{corollary}\label{H-W-version}
The Huneke-Wiegand conjecture in case $R$ is a complete intersection holds true if $${\rm cd}_I(M, M)< \infty$$ for some proper ideal $I$ of $R$.
\end{corollary}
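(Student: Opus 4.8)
The plan is to deduce this as an immediate special case of Corollary \ref{Goren}. Recall that in the statement of the Huneke-Wiegand conjecture the ring $R$ is one-dimensional Gorenstein; since every complete intersection local ring is Gorenstein, assuming $R$ to be a complete intersection is compatible with that setup (indeed it is a specialization of it). Thus the torsionfree finite module $M$ with a rank and with $M\otimes_RM^*$ torsionfree sits in exactly the hypotheses of Corollary \ref{Goren}, and it remains only to verify the two extra conditions appearing there.

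The key observation I would invoke is the characterization recorded earlier in Subsection \ref{dim1section}: a local ring $R$ is a complete intersection if and only if $\cidim_RM<\infty$ for \emph{every} finite $R$-module $M$. Consequently, as soon as $R$ is assumed to be a complete intersection, the hypothesis $\cidim_RM<\infty$ of Corollary \ref{Goren} is satisfied automatically, with no restriction whatsoever on $M$. This is precisely the point that collapses the pair of hypotheses in Corollary \ref{Goren} down to a single one in the complete intersection case.

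It then remains only to supply the second hypothesis of Corollary \ref{Goren}, namely ${\rm cd}_I(M,M)<\infty$ for some proper ideal $I$ of $R$; but this is exactly what we are assuming. I would therefore simply apply Corollary \ref{Goren} to conclude that $M$ is free, which is the assertion of the Huneke-Wiegand conjecture in this case. Since the argument reduces entirely to these two cited inputs, there is no genuine obstacle to overcome here; the only conceptual point worth emphasizing is the automatic finiteness of the complete intersection dimension, which is precisely what turns ${\rm cd}_I(M,M)<\infty$ into the single additional condition promised in the introduction.
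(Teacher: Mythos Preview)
Your proposal is correct and matches the paper's intended argument: the paper gives no explicit proof of this corollary, treating it as immediate from Corollary~\ref{Goren} together with the fact (recalled in Subsection~\ref{dim1section}) that over a complete intersection every finite module has finite CI-dimension. The only point to note is that the Huneke-Wiegand setup already forces $R$ to be one-dimensional Gorenstein, so the complete intersection hypothesis is indeed a specialization and the deduction goes through exactly as you describe.
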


Since the conjecture remains open in the complete intersection case (of codimension at least $2$), this corollary has the 
particular effect of highlighting the relevance of ${\rm cd}_I(M, M)$.

\bigskip

\end{document}